\newcommand{\R}{\mathbbm{R}}
\newcommand{\N}{\mathbbm{N}}
\newcommand{\ltworandom}{\mathcal{L}^2(\mathcal{M},\rho)}
\newcommand{\htwonorm}{\mathcal{H}_2}
\newcommand{\opG}{\mathscr{G}}
\newcommand{\opP}{\mathscr{P}}
\newcommand{\cS}{\mathcal{S}}
\newcommand{\dd}{{\operatorname{d}}}
\DeclareMathOperator{\diag}{diag}
\DeclareMathOperator{\Var}{Var}
\newtheorem{definition}{Definition}[section]
\newtheorem{theorem}[definition]{Theorem}
\newtheorem{lemma}[definition]{Lemma}
\newtheorem{remark}[definition]{Remark}
\begin{document}


\begin{center}
  {\bf \Large Stochastic Galerkin method and \\[0.1ex] 
        port-Hamiltonian form for linear \\[0.9ex]
        first-order ordinary differential equations}

\vspace{10mm}

{\large Roland~Pulch and Olivier S{\`e}te} \\[2ex]
{\small Institute of Mathematics and Computer Science, 
Universit\"at Greifswald, \\
Walther-Rathenau-Str.~47, 17489 Greifswald, Germany. \\
Email: {\tt [roland.pulch,olivier.sete]@uni-greifswald.de}}

\bigskip


\end{center}

\bigskip


\begin{center}
{\bf Abstract}

\begin{tabular}{p{13cm}}
We consider linear first-order systems of ordinary differential equations 
(ODEs) in port-Hamiltonian (pH) form. 
Physical parameters are remodelled as random variables to conduct 
an uncertainty quantification.  
A stochastic Galerkin projection yields a larger deterministic system 
of ODEs, which does not exhibit a pH form in general. 
We apply transformations of the original systems such that the 
stochastic Galerkin projection becomes structure-pre\-serv\-ing. 
Furthermore, we investigate meaning and properties of the 
Hamiltonian function belonging to the stochastic Galerkin system. 
A large number of random variables implies a high-di\-men\-sional 
stochastic Galerkin system, which suggests itself to apply 
model order reduction (MOR) generating a low-dimensional system of ODEs. 
We discuss struc\-ture pre\-servation in projection-based MOR, 
where the smaller systems of ODEs feature pH form again. 
Results of numerical computations are presented using two test examples. 

\bigskip

MSC-2020 classification: 65L05, 34F05, 93D30

\bigskip

Keywords: 
ordinary differential equation,
port-Hamiltonian system, 
Hamiltonian function, 
polynomial chaos, 
stochastic Galerkin method, 
uncertainty quantification, 
model order reduction.
\end{tabular}
\end{center}

\clearpage


\section{Introduction}
Modelling and analysis of physical problems may yield 
systems of differential equations, which can be arranged in 
port-Hamiltonian (pH) form, see~\cite{jacob-zwart,schaft-jeltsema}. 
Such a model is strongly related to the energy within the system. 
Important physical properties are included in a pH system 
like energy storage, energy dissipation, and others. 
Each pH system has a Hamiltonian function, 
which describes an internal energy. 
Thus a pH formulation is stable as well as passive 
due to its special form. 

Dynamical systems typically include physical parameters, 
which are often affected by uncertainties due to modelling errors, 
measurement errors, or others. 
In uncertainty quantification (UQ), a common approach consists in 
substituting some parameters by random variables to describe 
their variability, see~\cite{sullivan-book,xiu-book}. 
The ran\-dom-dependent solution is expanded in a series of 
the so-called polynomial chaos (PC). 
This stochastic model can be solved by stochastic collocation methods 
or stochastic Galerkin (SG) methods. 
In stochastic collocation techniques, the original dynamical systems 
are solved (many times) for different realisations of the parameters. 
The SG approach yields a larger deterministic system 
of differential equations, which has to be solved once, 
see~\cite{pulch-xiu,pulch2018}.

Considering linear second-order systems of ordinary differential 
equations (ODEs), the symmetry and positive (semi-)definiteness of 
matrices implies a pH formulation, cf.~\cite{beattie-mehrmann}. 
In~\cite{pulch2023}, the SG approach was examined 
for linear second-order systems of ODEs, where the focus was on 
properties of a pH formulation. 
The SG projection preserves the structure in this case. 

In this article, we investigate linear first-order systems of ODEs 
in pH form. 
Now the SG projection is not structure-preserving 
in general. 
We consider two parameter-dependent transformations of the original 
system, where each transformation yields a linear pH system 
of a special type, see~\cite{huang-jiang-xu,morandin-etal}. 
It follows that the SG method produces a pH system 
of the same type. 
Furthermore, we examine the properties of the Hamiltonian function 
belonging to the SG system. 

An SG system is high-dimensional 
in the case of large numbers of random parameters. 
Hence we study model order reduction (MOR) of the large dynamical systems. 
There are efficient methods for MOR of general linear ODE systems, 
see~\cite{antoulas}. 
In addition, several previous works treated MOR of linear pH systems, 
see~\cite{gugercin-etal,ionescu-astolfi}, for example.
We investigate the structure-preservation of projection-based 
MOR techniques when applied to the high-dimensional pH systems 
from the SG projection. 
Finally, we present results of numerical computations, 
where the original pH systems are well-known test examples, 
cf.~\cite{ionescu-astolfi,schaft-jeltsema}.

The article is organised as follows. 
We introduce pH systems and associated transformations 
in Section~\ref{sec:problem}. 
The stochastic model is arranged in Section~\ref{sec:stoch-model}, 
where also polynomial expansions are outlined. 
In Section~\ref{sec:galerkin}, 
we investigate the SG method applied to pH systems. 
MOR of the SG systems is discussed 
in Section~\ref{sec:mor}. 
Finally, Section~\ref{sec:example} includes numerical results 
of the test examples.


\section{Port-Hamiltonian Systems}
\label{sec:problem}

A general first-order linear dynamical system has the form 
\begin{align} \label{eq:ode}
\begin{split}
E \dot{x} & = A x + B u \\
y & = C x + D u 
\end{split}
\end{align}
with matrices $A,E \in \R^{n \times n}$, $B \in \R^{n \times n_{\rm in}}$, 
$C \in \R^{n_{\rm out} \times n}$, and 
$D \in \R^{n_{\rm out} \times n_{\rm in}}$. 
There are state variables $x : [0,\infty) \rightarrow \R^n$, 
inputs $u : [0,\infty) \rightarrow \R^{n_{\rm in}}$, 
and outputs $y : [0,\infty) \rightarrow \R^{n_{\rm out}}$. 

We use the notation $\mathcal{S}_{\succ}^{n}$ for the set of real 
symmetric, positive definite matrices with dimension~$n \times n$, 
and, likewise, $\mathcal{S}_{\succeq}^{n}$ for positive semi-definite 
matrices. 
We define linear first-order pH systems as in~\cite{beattie-mehrmann}.

\begin{definition} \label{def:ph}
The form of a linear first-order port-Hamiltonian (pH) system is
\begin{align} \label{eq:ph}
\begin{split}
E \dot{x} & = (J-R) Q x + (B-P) u \\
y & = (B+P)^\top Q x + (S+N) u 
\end{split}
\end{align}
with matrices $E, J,R,Q \in \R^{n \times n}$, 
$B,P \in \R^{n \times m}$, $S,N \in \R^{m \times m}$ 
such that~$J$ and~$N$ are skew-symmetric, 
$E^\top Q \in \mathcal{S}_{\succeq}^n$, and 
\begin{equation} \label{eq:matrix-w} 
W = \begin{pmatrix} 
Q^\top R Q & Q^\top P \\
P^\top Q & S \\
\end{pmatrix}
\in \mathcal{S}_{\succeq}^{n+m} . 
\end{equation}
The associated Hamiltonian function is given by
\begin{equation} \label{eq:hamiltonian}
H(x) = \tfrac{1}{2} \, x^\top (E^\top Q) x 
\end{equation}
for $x \in \R^n$.
\end{definition}

We assume that the mass matrix~$E$ is non-singular. 
Consequently, the system~\eqref{eq:ph} consists of ODEs. 
The system represents differential-algebraic equations (DAEs), 
if the mass matrix is singular.

A pH system is Lyapunov stable due to this structure. 
Furthermore, a pH system is passive, and the 
Hamiltonian function~\eqref{eq:hamiltonian} represents 
a storage function characterising an internal energy, 
see~\cite{schaft-jeltsema,willems}.

Our aim is a structure preservation in the SG method. 
A struc\-ture-preserving projection is given in the case of 
$Q = I_n$ with the identity matrix~$I_n \in \R^{n \times n}$. 
Such a pH system exhibits the form
\begin{align} \label{eq:ph-trafo}
\begin{split}
\tilde{E} \dot{x} & = (\tilde{J} - \tilde{R}) x + (\tilde{B}-\tilde{P}) u \\
y & = (\tilde{B}+\tilde{P})^\top x + (S+N) u ,
\end{split}
\end{align}
where $\tilde{E} \in \mathcal{S}_{\succeq}^n$, 
$\tilde{R} \in \mathcal{S}_{\succeq}^n$, 
and $\tilde{J}$ is skew-symmetric.  
It holds that $\tilde{E} \in \mathcal{S}_{\succ}^n$ in the case of ODEs.
There are already two approaches known in the literature to 
achieve this property, cf.~\cite{huang-jiang-xu,morandin-etal}, 
for example.

\subsection{Basis Transformation}
\label{sec:trafo1}
Let $Q \in \mathcal{S}_{\succ}^n$. 
We consider a symmetric decomposition $Q = T T^\top$. 
The matrix~$T$ is non-singular.
For $n > 1$, there is an infinite number of such symmetric decompositions.
Popular choices are
\begin{enumerate}
\renewcommand{\labelenumi}{\roman{enumi})}
\item Cholesky decomposition: $T$ is a lower triangular matrix.

\item Matrix square root: 
Let $Q = S D S^\top$ be the eigendecomposition of~$Q$ 
with $D = \diag(\lambda_1,\ldots,\lambda_n)$ and orthogonal $S$. 
Then
$T = Q^{\frac12} = S D^{\frac12} S^\top$, 
where $D^{\frac12} = \diag(\sqrt{\lambda_1},\ldots,\sqrt{\lambda_n})$, 
satisfies $Q = T^2$ and $T \in \mathcal{S}_{\succ}^n$. 
\end{enumerate}
\begin{lemma} \label{lemma:trafo1}
Let a pH system~\eqref{eq:ph} be given and let $Q = T T^\top \in \cS_\succ^n$.
Using the basis transformation $\tilde{x} = T^\top x$, we define
\begin{equation*}
\tilde{E} = T^\top E T^{-\top}, \quad 
\tilde{J} = T^\top J T, \quad 
\tilde{R} = T^\top R T, \quad 
\tilde{B} = T^\top B, \quad 
\tilde{P} = T^\top P.
\end{equation*}
Then the system~\eqref{eq:ph} is equivalent to the pH system
\begin{align} \label{eq:ph-trafo-tilde}
\begin{split}
\tilde{E} \dot{\tilde{x}} & = (\tilde{J} - \tilde{R}) \tilde{x} + (\tilde{B}-\tilde{P}) u, \\
y & = (\tilde{B}+\tilde{P})^\top \tilde{x} + (S+N) u,
\end{split}
\end{align}
of the form~\eqref{eq:ph-trafo}.
The Hamiltonian functions $H$ of~\eqref{eq:ph} and $\tilde{H}$ 
of~\eqref{eq:ph-trafo-tilde} are related by
\begin{equation} \label{eqn:hamiltonian_basistrafo}
\tilde{H}(\tilde{x}) = H(x), \quad \tilde{x} = T^\top x, \quad x \in \R^n.
\end{equation}
\end{lemma}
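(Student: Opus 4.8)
The plan is to treat the statement as a sequence of elementary verifications built on the single algebraic identity $Q T^{-\top} = T$, which follows from $Q = TT^\top$ and the non-singularity of $T$. First I would substitute $x = T^{-\top}\tilde{x}$ and $\dot{x} = T^{-\top}\dot{\tilde{x}}$ into the state equation of~\eqref{eq:ph}, obtaining $E T^{-\top}\dot{\tilde{x}} = (J-R)QT^{-\top}\tilde{x} + (B-P)u$, and then use $QT^{-\top}=T$ to replace $QT^{-\top}$ by $T$. Multiplying from the left by $T^\top$ and inserting the definitions of $\tilde{E},\tilde{J},\tilde{R},\tilde{B},\tilde{P}$ yields the first line of~\eqref{eq:ph-trafo-tilde} directly. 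For the output equation I would use $(B+P)^\top Q T^{-\top} = (B+P)^\top T = (\tilde{B}+\tilde{P})^\top$, while $S$ and $N$ remain unchanged. This establishes the equivalence of the two systems.

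Next I would check that~\eqref{eq:ph-trafo-tilde} is genuinely a pH system of the form~\eqref{eq:ph-trafo}, that is, with $Q=I_n$. Skew-symmetry of $\tilde{J} = T^\top J T$ is immediate from $J^\top=-J$, and $N$ is carried over unchanged. From $W \in \cS_\succeq^{n+m}$ in~\eqref{eq:matrix-w} the leading block $Q^\top R Q$ is symmetric positive semi-definite; since $Q$ is symmetric and invertible this forces $R=R^\top$ and $R\in\cS_\succeq^n$, whence $\tilde{R} = T^\top R T$ is symmetric positive semi-definite as a congruence of $R$. To obtain $\tilde{W} = \begin{pmatrix}\tilde{R} & \tilde{P}\\ \tilde{P}^\top & S\end{pmatrix}\in\cS_\succeq^{n+m}$ I would apply the block congruence by $\diag(T^{-1},I_m)$ to $W$ and simplify with $Q T^{-\top}=T$ and $T^{-1}Q=T^\top$, which reproduces $\tilde{W}$ exactly; congruence preserves positive semi-definiteness.

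The step I expect to be the crux is the verification that $\tilde{E} = T^\top E T^{-\top}$ lies in $\cS_\succeq^n$, because $\tilde{E}$ is not a congruence of $E^\top Q$ and its symmetry is not apparent from its definition. Here I would exploit that $E^\top Q\in\cS_\succeq^n$ gives the symmetry $E^\top Q = Q E$. For an arbitrary $v\in\R^n$, substituting $v=T^\top w$ gives $v^\top\tilde{E} v = w^\top Q E w = w^\top (E^\top Q) w \ge 0$, which yields both symmetry and positive semi-definiteness of $\tilde{E}$; in the ODE case non-singularity of $E$ upgrades this to $\tilde{E}\in\cS_\succ^n$. Finally, the Hamiltonian identity~\eqref{eqn:hamiltonian_basistrafo} follows from the same relation: with $\tilde{x}=T^\top x$ one computes $\tilde{H}(\tilde{x})=\tfrac{1}{2}\tilde{x}^\top\tilde{E}\tilde{x} = \tfrac{1}{2} x^\top Q E x = \tfrac{1}{2} x^\top(E^\top Q)x = H(x)$, where $TT^\top=Q$ and $E^\top Q=QE$ are used.
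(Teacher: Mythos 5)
Your reduction of the state and output equations via $QT^{-\top}=T$, your block-congruence argument for $\tilde{W}$, and your Hamiltonian computation are all correct and essentially the same as the paper's proof. The genuine gap is in the step you yourself single out as the crux: the symmetry of $\tilde{E}$. The scalar identity $v^\top \tilde{E}\, v = w^\top (E^\top Q)\, w \ge 0$ (with $v = T^\top w$) constrains only the quadratic form of $\tilde{E}$, and a quadratic form determines only the symmetric part of a matrix: every matrix $\tilde{E}+K$ with $K$ skew-symmetric has exactly the same quadratic form. So from $v^\top \tilde{E}\, v \ge 0$ for all $v$ you may conclude $\tilde{E}+\tilde{E}^\top \in \cS_{\succeq}^{n}$, but not $\tilde{E}=\tilde{E}^\top$; the claim that this argument ``yields both symmetry and positive semi-definiteness'' is not valid as written. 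Since the pH form~\eqref{eq:ph-trafo} (Definition~\ref{def:ph} with $Q=I_n$) requires $\tilde{E}^\top I_n \in \cS_{\succeq}^{n}$, in particular symmetry of $\tilde{E}$, this step must be repaired.

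The repair is immediate with identities you already use, and it also corrects the premise of your remark: $\tilde{E}$ \emph{is} a congruence of $E^\top Q$. Indeed, from $QT^{-\top}=T$,
\begin{equation*}
\tilde{E}^\top = (T^\top E T^{-\top})^\top = T^{-1} E^\top T
= T^{-1} E^\top Q T^{-\top} = T^{-1} (E^\top Q)\, T^{-\top} ,
\end{equation*}
which is symmetric and positive (semi-)definite because $E^\top Q$ is and $T$ is non-singular; hence $\tilde{E}=\tilde{E}^\top \in \cS_{\succeq}^{n}$, and $\tilde{E}\in \cS_{\succ}^{n}$ in the ODE case. This matrix-level congruence is exactly the argument in the paper's proof. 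Alternatively, your relation $E^\top Q = QE$ yields the matrix identity $\tilde{E}^\top = T^{-1}E^\top Q\, T^{-\top} = T^{-1} Q E\, T^{-\top} = T^\top E\, T^{-\top} = \tilde{E}$ directly, using $T^{-1}Q = T^\top$. With this one step replaced, your proof is complete and coincides in substance with the paper's.
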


\begin{proof}
Note that $\dot{\tilde{x}} = T^\top \dot{x}$,
because $T$ is constant.  
Since $T$ is non-singular, the first equation
in~\eqref{eq:ph} is equivalent to
\begin{equation*}
T^\top E T^{-\top} T^\top \dot{x} = T^\top (J-R) T T^\top x + T^\top (B-P) u,
\end{equation*}
i.e., to the first equation in~\eqref{eq:ph-trafo-tilde} 
for the state variables.  
The equivalence of the second equation in~\eqref{eq:ph} and~\eqref{eq:ph-trafo-tilde} for the outputs is immediate.
Next, we show that~\eqref{eq:ph-trafo-tilde} is pH.
Since $J$ is skew-symmetric, so is $\tilde{J}$.  Moreover
\begin{equation} \label{eqn:tildeEQ}
\tilde{E}^\top I_n = (T^\top E T^{-\top})^\top = T^{-1} E^\top T
= T^{-1} E^\top T T^\top T^{-T} = T^{-1} (E^\top Q) T^{-\top},
\end{equation}
hence $\tilde{E}^\top$ is symmetric positive (semi-)definite if and 
only if $E^\top Q$ is symmetric positive (semi-)definite.
Finally, it holds that
\begin{equation*}
\tilde{W}
= \begin{pmatrix} I_n^\top \tilde{R} I_n & I_n^\top
\tilde{P} \\ \tilde{P}^T I_n & S \end{pmatrix}
= \begin{pmatrix} T \\ & I_m \end{pmatrix}^{-1} W \begin{pmatrix} T \\ & I_m 
\end{pmatrix}^{- \top} \in \cS^{n+m}_\succeq,
\end{equation*}
since $W \in \cS^{n+m}_\succeq$ and $T$ is non-singular.
This shows that~\eqref{eq:ph-trafo-tilde} is a pH system.
The Hamiltonian function $\tilde{H}$ of~\eqref{eq:ph-trafo-tilde} satisfies,
using~\eqref{eqn:tildeEQ},
\begin{equation*}
\tilde{H}(\tilde{x})
= \tfrac{1}{2} \, \tilde{x}^\top \tilde{E}^\top I_n \tilde{x}
= \tfrac{1}{2} \, x^\top T (T^{-1} E^\top Q T^{-\top}) T^\top x
= \tfrac{1}{2} \, x^\top E^\top Q x = H(x).
\end{equation*}
This completes the proof.
\end{proof}


An explicit system of ODEs remains an explicit system 
under this change of basis. 

\subsection{Transformation in Image Space} 
\label{sec:trafo2}
Let $Q \in \R^{n \times n}$ just be non-singular. 
Multiplication of the dynamical part in~\eqref{eq:ph} with $Q^\top$ yields
\begin{align*} 
\begin{split}
Q^\top E \dot{x} & = (Q^\top J Q - Q^\top R Q) x + (Q^\top B- Q^\top P) u \\
y & = (Q^\top B + Q^\top P)^\top x + (S+N) u.
\end{split}
\end{align*}
\begin{lemma} \label{lemma:trafo2}
Let a pH system~\eqref{eq:ph} be given with non-singular matrix $Q$.
Define
\begin{equation*}
\tilde{E} = Q^\top E, \quad 
\tilde{J} = Q^\top J Q, \quad  
\tilde{R} = Q^\top R Q, \quad 
\tilde{B} = Q^\top B, \quad 
\tilde{P} = Q^\top P.
\end{equation*}
Then the pH systems~\eqref{eq:ph} and~\eqref{eq:ph-trafo} are equivalent.
The Hamiltonian functions $H$ of~\eqref{eq:ph} and $\tilde{H}$
of~\eqref{eq:ph-trafo} are equal, i.e.,
\begin{equation*}
\tilde{H}(x) = H(x), \quad x \in \R^n.
\end{equation*}
\end{lemma}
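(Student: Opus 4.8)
The plan is to verify directly that the system obtained by left-multiplying the dynamical part of~\eqref{eq:ph} by $Q^\top$ satisfies all the conditions in Definition~\ref{def:ph} with $Q$ replaced by the identity, i.e.\ that it has the form~\eqref{eq:ph-trafo}. Since the transformation leaves the state variable $x$ unchanged (unlike the basis transformation in Lemma~\ref{lemma:trafo1}), no chain-rule bookkeeping on $\dot{x}$ is needed, and the output equation requires no work beyond noting $(Q^\top B \pm Q^\top P)^\top = (B \pm P)^\top Q$, which matches the original output up to the $Q$ that was already present. Thus the bulk of the argument is the pH-structure check for the transformed matrices.

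First I would confirm the skew-symmetry of $\tilde{J} = Q^\top J Q$: this is immediate from $\tilde{J}^\top = Q^\top J^\top Q = -Q^\top J Q = -\tilde{J}$, using skew-symmetry of $J$. Second, I would check the positive semi-definiteness condition on the mass term. In the form~\eqref{eq:ph-trafo} the relevant requirement is $\tilde{E} \in \cS_\succeq^n$ (and $\in \cS_\succ^n$ in the ODE case); here $\tilde{E} = Q^\top E$, and the key identity is $\tilde{E} = Q^\top E = (E^\top Q)^\top$, so $\tilde{E}$ is symmetric positive (semi-)definite exactly when $E^\top Q$ is, which holds by hypothesis. Third, I would handle the matrix $\tilde{W}$: since $\tilde{R} = Q^\top R Q$ and $\tilde{P} = Q^\top P$, I would exhibit $\tilde{W}$ as a congruence transform of $W$, namely
\begin{equation*}
\tilde{W}
= \begin{pmatrix} \tilde{R} & \tilde{P} \\ \tilde{P}^\top & S \end{pmatrix}
= \begin{pmatrix} Q^\top & \\ & I_m \end{pmatrix} W \begin{pmatrix} Q & \\ & I_m \end{pmatrix},
\end{equation*}
so that $\tilde{W} \in \cS_\succeq^{n+m}$ follows from $W \in \cS_\succeq^{n+m}$ because congruence by a (block) matrix with non-singular $Q$ preserves symmetric positive semi-definiteness. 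I would also note that $\tilde{R}$ is symmetric positive semi-definite as the leading diagonal block of $\tilde{W}$ (or directly by congruence of $Q^\top R Q$ from $R$, which is itself the block of $W$ after the congruence built from $Q^\top R Q$).

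Finally, for the Hamiltonian I would compute $\tilde{H}(x) = \tfrac12 x^\top \tilde{E}^\top I_n\, x = \tfrac12 x^\top (Q^\top E)^\top x = \tfrac12 x^\top E^\top Q\, x = H(x)$, using that the mass matrix $Q$ of the transformed system is $I_n$; since the state is unchanged, the two Hamiltonians coincide pointwise. The only point requiring a little care rather than routine symbol-pushing is making the congruence structure of $\tilde{W}$ explicit and invoking the standard fact that congruence preserves positive semi-definiteness; everything else reduces to the single algebraic identity $Q^\top E = (E^\top Q)^\top$ together with the skew-symmetry and congruence observations. I expect no genuine obstacle, the main subtlety being merely to confirm that the whole $W$ (not just $\tilde{R}$) transforms by congruence so that the joint definiteness condition~\eqref{eq:matrix-w} is inherited.
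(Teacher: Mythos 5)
Your overall strategy---direct verification, no state transformation, output equation immediate---is the same as the paper's, and your handling of the equivalence, of the skew-symmetry of $\tilde{J}$, of $\tilde{E}^\top I_n = (Q^\top E)^\top = E^\top Q$, and of the Hamiltonian is correct and essentially identical to the paper's proof. The one step that fails as written is your congruence identity for $\tilde{W}$. In this paper, $W$ is defined in~\eqref{eq:matrix-w} as
\begin{equation*}
W = \begin{pmatrix} Q^\top R Q & Q^\top P \\ P^\top Q & S \end{pmatrix},
\end{equation*}
i.e.\ the factors of $Q$ are already built into $W$. With this definition, your claimed identity
\begin{equation*}
\begin{pmatrix} \tilde{R} & \tilde{P} \\ \tilde{P}^\top & S \end{pmatrix}
= \begin{pmatrix} Q^\top & \\ & I_m \end{pmatrix} W \begin{pmatrix} Q & \\ & I_m \end{pmatrix}
\end{equation*}
is false in general: its right-hand side has top-left block $Q^\top (Q^\top R Q) Q$ and top-right block $Q^\top Q^\top P$, not $Q^\top R Q$ and $Q^\top P$. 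Your identity would be correct if $W$ were $\begin{pmatrix} R & P \\ P^\top & S \end{pmatrix}$, a convention used elsewhere in the pH literature, but that is not the matrix whose positive semi-definiteness Definition~\ref{def:ph} gives you (indeed, $R$ itself is not assumed symmetric positive semi-definite there).

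The repair is simpler than the congruence you set up, and it is exactly what the paper does: since $\tilde{R} = Q^\top R Q$ and $\tilde{P} = Q^\top P$, the matrix belonging to the transformed system~\eqref{eq:ph-trafo} (whose $Q$-matrix is $I_n$) is
\begin{equation*}
\tilde{W} = \begin{pmatrix} \tilde{R} & \tilde{P} \\ \tilde{P}^\top & S \end{pmatrix}
= \begin{pmatrix} Q^\top R Q & Q^\top P \\ P^\top Q & S \end{pmatrix} = W,
\end{equation*}
identically, so $\tilde{W} \in \cS_\succeq^{n+m}$ is inherited verbatim from the hypothesis---no congruence and no non-singularity of $Q$ is needed for this particular step. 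In short, the gap is a misidentification of the matrix $W$ of Definition~\ref{def:ph}; once it is read correctly, your third step collapses to the trivial equality $\tilde{W} = W$ and the rest of your argument goes through unchanged. (A genuine congruence of $W$ does appear in this paper, but in the proof of Lemma~\ref{lemma:trafo1}, where the state is transformed; that is likely the source of the confusion.)
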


\begin{proof}
Multiplication of the first equation in~\eqref{eq:ph} yields the equivalence
of~\eqref{eq:ph} and~\eqref{eq:ph-trafo}, since $Q$ is non-singular.
Next, we show that~\eqref{eq:ph-trafo} has pH structure.
Obviously, $\tilde{J} = Q^\top J Q$ is again skew-symmetric,
since $J$ has this property.
Moreover, 
$\tilde{E}^\top I_n = (Q^\top E)^\top = E^\top Q \in \cS^n_\succeq$ and
\begin{equation*}
\tilde{W}
= \begin{pmatrix} \tilde{R} & \tilde{P} \\ \tilde{P}^\top & S \end{pmatrix}
= W \in \cS^{n+m}_\succeq.
\end{equation*}
The Hamiltonian function of~\eqref{eq:ph-trafo} is
\begin{equation*}
\tilde{H}(x) = \tfrac{1}{2} \, x^\top \tilde{E}^\top I_n x
= \tfrac{1}{2} \, x^\top E^\top Q x = H(x)
\end{equation*}
for $x \in \R^n$. This completes the proof.
\end{proof}

This technique represents a basis transformation in the image space, 
whereas the state space remains unchanged.  
In the case of explicit systems of ODEs~\eqref{eq:ph}, a disadvantage is
that the transformed system~\eqref{eq:ph-trafo} consists of 
implicit ODEs.


\section{Stochastic Modelling}
\label{sec:stoch-model}
Now we address a variability in the parameters of the 
linear dynamical systems. 

\subsection{Parameter-dependent Systems} 
Typically, mathematical models include physical parameters and/or 
other parameters. 
We assume that the parameters $\mu \in \mathcal{M}$ with 
parameter domain $\mathcal{M} \subseteq \R^q$ are present in 
a linear pH system~\eqref{eq:ph}. 
Obviously, a transformed system~\eqref{eq:ph-trafo} with matrix 
$\tilde{Q} = I_n$ depends on the same parameters. 
Thus we consider a pH system of the form
\begin{align} \label{eq:ph-par}
\begin{split}
E(\mu) \dot{x} & = (J(\mu)-R(\mu)) x + (B(\mu)-P(\mu)) u \\
y & = (B(\mu)+P(\mu))^\top x + (S(\mu)+N(\mu)) u .
\end{split}
\end{align}
The state variables $x : [0,\infty) \times \mathcal{M} \rightarrow \R^n$ 
depend on time as well as the parameters. 
Often the parameters are affected by uncertainties. 
Hence a variability of the parameters has to be taken into account. 

\subsection{Random Variables and Function Spaces}
The parameters~$\mu \in \mathcal{M}$ of the linear dynamical 
system~\eqref{eq:ph-par} are substituted by independent 
random variables on a probability space $(\Omega,\mathcal{A},P)$. 
Traditional probability distributions can be employed for each 
random variable like uniform distribution, beta distribution, 
gamma distribution, Gaussian distribution, etc. 
Let $\rho : \mathcal{M} \rightarrow \R$ be a joint 
probability density function. 
The expected value of a measurable function $f : \mathcal{M} \rightarrow \R$ 
is defined by
\begin{equation} \label{eq:expected-value}
\mathbb{E}[f] = \int_{\Omega} f(\mu(\omega)) \; \mathrm{d}P(\omega)  
= \int_{\mathcal{M}} f(\mu) \rho(\mu) \; \mathrm{d}\mu .
\end{equation}
The inner product of two measurable functions $f$ and $g$ reads as
\begin{equation} \label{eq:inner-product}
\langle f , g \rangle = 
\int_{\mathcal{M}} f(\mu) g(\mu) \rho(\mu) \; \mathrm{d}\mu . 
\end{equation} 
This inner product is well-defined on the space of 
square integrable functions
\begin{equation*}
\ltworandom = \left\{ f : \mathcal{M} \rightarrow \R \; : \; 
f \; \mbox{measurable and} \; \mathbb{E}[f^2] < \infty \right\} .
\end{equation*}
The induced norm is given by 
$\| f \|_{\ltworandom} = \sqrt{\langle f , f \rangle}$. 

\subsection{Polynomial Chaos Expansions}
Let $(\Phi_i)_{i \in \N}$ be an orthonormal basis with respect to the 
inner product~\eqref{eq:inner-product}, which consists of 
polynomials $\Phi_i : \mathcal{M} \rightarrow \R$. 
We assume that $\Phi_1 \equiv 1$ is the basis polynomial of degree zero. 
A function $f \in \ltworandom$ exhibits the polynomial chaos (PC) expansion 
\begin{equation} \label{eq:pc-f}
f(\mu) = \sum_{i=1}^{\infty} f_i \Phi_i(\mu)
\end{equation} 
with real-valued coefficients
\begin{equation*}
f_i = \langle f , \Phi_i \rangle .
\end{equation*}
The series~\eqref{eq:pc-f} converges in the $\ltworandom$-norm.
We recover the expected value as well as the variance by
\begin{equation*}
\mathbb{E} [f] = f_1 \quad \mbox{and} \quad 
\Var[f] = \sum_{i=2}^\infty f_i^2.
\end{equation*}
In the case of time-dependent functions $f(t,\mu)$ for $t \in I$ 
with $I \subseteq \R$, 
the PC expansion is used pointwise for each $t \in I$. 
We also apply the PC expansion to vector-valued functions or 
matrix-valued functions by considering each component separately. 

Concerning a pH system of ODEs, we use the PC expansions for 
the state variables, the inputs, and the outputs 
\begin{equation} \label{eq:pce} 
x(t,\mu) = \sum_{i=1}^{\infty} v_i(t) \Phi_i(\mu), \;
u(t,\mu) = \sum_{i=1}^{\infty} u_i(t) \Phi_i(\mu), \; 
y(t,\mu) = \sum_{i=1}^{\infty} w_i(t) \Phi_i(\mu) .  
\end{equation} 
The coefficients $u_i \in \R^m$ are known from the inputs, 
whereas the coefficients~$v_i \in \R^n$ and~$w_i \in \R^m$ 
are unknown a priori. 
In numerical methods, 
we truncate the PC expansions~\eqref{eq:pce} to $s$~stochastic modes
\begin{equation} \label{eq:pc-truncated} 
x^{(s)}(t,\mu) = \sum_{i=1}^s v_i(t) \Phi_i(\mu), \;
u^{(s)}(t,\mu) = \sum_{i=1}^s u_i(t) \Phi_i(\mu), \; 
y^{(s)}(t,\mu) = \sum_{i=1}^s w_i(t) \Phi_i(\mu) .  
\end{equation} 
Typically, all basis polynomials up to some total degree~$d$ are included. 
It follows that the number of terms becomes 
$s = \frac{(d+q)!}{d!q!}$, see~\cite{xiu-book}. 

\begin{remark}
In many applications, the inputs are independent, 
and thus do not depend on the parameters. 
This case can be described by choosing $u_1=u$ and $u_i=0$ for $i>1$ 
in~\eqref{eq:pce} or~\eqref{eq:pc-truncated}. 
However, we have to keep the sum including $s$~PC coefficients, 
since a pH system requires as many inputs as outputs. 
\end{remark}


\section{Stochastic Galerkin Method}
\label{sec:galerkin}
There are SG methods and stochastic collocation methods 
to determine unknown coefficients of a PC expansion, 
see~\cite{sullivan-book,xiu-book}. 
We examine SG systems, whereas just the original 
systems of ODEs are solved for different realisations of the parameters 
in a stochastic collocation technique. 

\subsection{Stochastic Galerkin Projection}
If the truncated PC expansions~\eqref{eq:pc-truncated} are 
inserted into the pH system, then a residual is generated. 
The Galerkin approach requires that this residual is orthogonal 
to the subspace spanned by the basis polynomials 
$\{ \Phi_1,\ldots,\Phi_s \}$ with respect to the 
inner product~\eqref{eq:inner-product}. 
Basic calculations yield a larger deterministic first-order system 
of linear ODEs. 

We define an additional function space as well as an associated operator, 
as in~\cite{pulch2023}, 
to describe and analyse the SG method.

\begin{definition} \label{def:function-space}
The set of all measurable functions 
$A : \mathcal{M} \rightarrow \R^{n \times m}$, $A = (a_{k\ell})$,  
such that the expected values, cf.~\eqref{eq:expected-value},
\begin{equation} \label{eq:aij}
\hat{a}_{ijk\ell} = \mathbb{E} \left[ a_{k\ell} \Phi_i \Phi_j \right] = 
\int_{\mathcal{M}} a_{k\ell}(\mu) \Phi_i(\mu) \Phi_j(\mu) \rho(\mu) \; 
\mathrm{d}\mu 
\end{equation}
are finite for all $i,j \in \N$ and $1 \le k \le n$, $1 \le \ell \le m$, 
is denoted by $\mathcal{F}^{n,m}$. 
The SG projection of $A \in \mathcal{F}_{n,m}$ 
with $s$ modes is 
\begin{equation} \label{eq:projection-matrix} 
\hat{A} = \opG_s(A) 
\end{equation}
with $\hat{A} = (\hat{A}_{ij})_{i,j = 1,\ldots,s}$ consisting of 
the submatrices
$\hat{A}_{ij} = (\hat{a}_{ijk\ell})_{k, \ell} \in \R^{n \times m}$.
\end{definition}

We obtain an operator 
$\opG_s : \mathcal{F}_{n,m} \rightarrow \R^{ns \times ms}$ 
for each integer~$s \geq 1$. 
The operator~\eqref{eq:projection-matrix} is linear, 
because it holds that
\begin{equation} \label{eq:operator-linear} 
\opG_s( \alpha A + \beta B ) = 
\alpha \opG_s (A) + \beta \opG_s (B) 
\end{equation}
for $A,B \in \mathcal{F}_{n,m}$ and $\alpha,\beta \in \R$. 
However, this operator is not multiplicative, i.e., 
\begin{equation} \label{eq:not-multiplicative} 
\opG_s ( A B ) \neq \opG_s(A) \opG_s(B) 
\end{equation}
for $A \in \mathcal{F}_{n,m}$ and $B \in \mathcal{F}_{m,k}$ in general. 

The SG system associated to the 
pH system~\eqref{eq:ph-par} has the form
\begin{align} \label{eq:galerkin}
\begin{split}
\hat{E} \dot{\hat{v}} & = 
(\hat{J} - \hat{R}) \hat{v} + (\hat{B}+\hat{P}) \hat{u} \\
\hat{y} & = (\hat{B}-\hat{P})^\top \hat{v} + (\hat{S} + \hat{N}) \hat{u} .
\end{split}
\end{align}
The inputs and outputs are $\hat{u} = (u_1^\top,\ldots,u_s^\top)^\top$ 
and $\hat{w} = (\hat{w}_1^\top,\ldots,\hat{w}_s^\top)^\top$, 
respectively.
The unknown state variables are 
$\hat{v} = (\hat{v}_1^\top,\ldots,\hat{v}_s^\top)^\top$. 
We obtain approximations $\hat{v}_i \approx v_i$ and 
$\hat{w}_i \approx w_i$ of the exact coefficients in the 
PC expansions~\eqref{eq:pce}.
It holds that $\hat{E} = \opG_s(E)$, $\hat{J} = \opG_s(J)$, etc. 

\subsection{Structure-Preservation}
Now the question is if the SG system~\eqref{eq:galerkin}
is again a pH system as in Definition~\ref{def:ph}.

\begin{lemma} \label{lemma:definite}
Let $A \in \mathcal{F}^{n,n}$. 
If $A(\mu) \in \mathcal{S}_{\succ}^n$ for almost all $\mu \in \mathcal{M}$, 
then the SG projection~\eqref{eq:projection-matrix} 
satisfies $\hat{A} \in \mathcal{S}_{\succ}^{ns}$. 
Likewise, the property $A(\mu) \in \mathcal{S}_{\succeq}^n$ for almost all $\mu \in \mathcal{M}$ 
implies $\hat{A} \in \mathcal{S}_{\succeq}^{ns}$. 
If $A(\mu)$ is skew-symmetric for almost all~$\mu \in \mathcal{M}$, 
then $\hat{A}$ is also skew-symmetric.
\end{lemma}

The proof of the positive definite case can be found in~\cite{pulch-jmi}. 
Likewise, we obtain the result in the positive semi-definite case.

As an example, we show the preservation of symmetry as well as 
skew-symmetry. 
Let $A \in \mathcal{F}_{n,n}$ with $A^\top = A$ or $A^\top = - A$.  
The property $A^\top = \pm A$ implies $a_{k\ell} = \pm a_{\ell k}$ 
for $k,\ell = 1,\ldots,n$.
In view of~\eqref{eq:aij}, it follows that
\begin{equation*}
\hat{a}_{ijk\ell} = 
\mathbb{E} \left[ a_{k\ell} \Phi_i \Phi_j \right] = 
\mathbb{E} \left[ \pm a_{\ell k} \Phi_i \Phi_j \right] =
\pm \hat{a}_{ji\ell k}
\end{equation*}
for all $i,j \in \N$ and $k,\ell = 1,\ldots,n$. 
Hence we obtain $\hat{A}^\top = \hat{A}$ or $\hat{A}^\top = -\hat{A}$. 

\begin{theorem} \label{thm:galerkin-structure}
Let a first-order pH system be given in the form~\eqref{eq:ph-par} 
with random variables $\mu : \Omega \rightarrow \mathcal{M}$. 
The SG projection yields a system of 
ODEs~\eqref{eq:galerkin}, which has a pH form again.
\end{theorem}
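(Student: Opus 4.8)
The plan is to verify that each structural requirement in Definition~\ref{def:ph} survives the SG projection, exploiting the linearity of $\opG_s$ and the structural lemmas already established. The system~\eqref{eq:ph-par} is of the form~\eqref{eq:ph-trafo} (that is, with $Q = I_n$), so I only need to check the conditions for that special type: namely that $\hat{E}^\top \in \cS_\succeq^{ns}$ (indeed $\in \cS_\succ^{ns}$ for ODEs), that $\hat{J}$ and $\hat{N}$ are skew-symmetric, and that the block matrix $\hat{W}$ built from $\hat{R}$, $\hat{P}$, $\hat{S}$ lies in $\cS_\succeq^{(n+m)s}$.

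First I would treat the skew-symmetry of $\hat{J}$ and $\hat{N}$: since $J(\mu)$ and $N(\mu)$ are skew-symmetric for (almost) all $\mu$, Lemma~\ref{lemma:definite} immediately gives that $\hat{J} = \opG_s(J)$ and $\hat{N} = \opG_s(N)$ are skew-symmetric. Next, because the system is already in the $Q = I_n$ form, the semidefiniteness requirement $E^\top Q = E^\top \in \cS_\succeq^n$ (here I assume $E(\mu)^\top \in \cS_\succ^n$ for ODEs, or at least semidefinite) transfers via Lemma~\ref{lemma:definite} to $\hat{E}^\top = \opG_s(E)^\top \in \cS_\succeq^{ns}$, using that $\opG_s$ preserves symmetry and (semi-)definiteness and that $\hat{E}^\top = \opG_s(E^\top)$ by the symmetry-preservation computation in the excerpt.

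The key step is the passivity matrix condition. With $Q = I_n$ the matrix in~\eqref{eq:matrix-w} reduces to $W(\mu) = \left(\begin{smallmatrix} R(\mu) & P(\mu) \\ P(\mu)^\top & S(\mu) \end{smallmatrix}\right) \in \cS_\succeq^{n+m}$ for almost all $\mu$. The natural move is to view $W$ itself as an element of $\mathcal{F}^{n+m,n+m}$ and apply Lemma~\ref{lemma:definite} directly to conclude $\opG_s(W) \in \cS_\succeq^{(n+m)s}$. The obstacle is that the block $\hat{W}$ assembled from $\hat{R} = \opG_s(R)$, $\hat{P} = \opG_s(P)$, $\hat{S} = \opG_s(S)$ is indexed blockwise by the $s$ stochastic modes and then by the $(n+m)$ physical coordinates, whereas $\opG_s(W)$ uses the opposite nesting of indices; the two matrices differ only by a fixed permutation (a commutation/perfect-shuffle matrix $\Pi$) that reorders rows and columns identically. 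Hence I would show $\hat{W} = \Pi\, \opG_s(W)\, \Pi^\top$ for an orthogonal permutation $\Pi$, so that $\opG_s(W) \in \cS_\succeq^{(n+m)s}$ forces $\hat{W} \in \cS_\succeq^{(n+m)s}$ as well.

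Finally I would assemble the pieces: the skew-symmetry of $\hat{J}$, $\hat{N}$, the semidefiniteness of $\hat{E}^\top$, and $\hat{W} \in \cS_\succeq^{(n+m)s}$ are exactly the hypotheses of Definition~\ref{def:ph} for the projected system~\eqref{eq:galerkin} (in its $Q = I_{ns}$ form), so~\eqref{eq:galerkin} is a pH system. I would also remark that since $E(\mu)$ is nonsingular with $E^\top \in \cS_\succ^n$, Lemma~\ref{lemma:definite} gives $\hat{E}^\top \in \cS_\succ^{ns}$, confirming that the SG system is again a system of ODEs rather than DAEs. The main care point throughout is the index-reordering in the $\hat{W}$ block, which is where a careless identification of $\hat{W}$ with $\opG_s(W)$ would go wrong.
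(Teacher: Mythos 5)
Your proposal is correct and takes essentially the same route as the paper's own proof: Lemma~\ref{lemma:definite} supplies skew-symmetry of $\hat{J},\hat{N}$ and definiteness of $\hat{E},\hat{R},\hat{S}$, and the passivity block is handled exactly as you describe, by applying Lemma~\ref{lemma:definite} to the matrix-valued function $W(\mu)$ as a whole and then relating $\opG_s(W)$ to the SG system's block matrix through a permutation (shuffle) matrix $\Pi$. The index-reordering subtlety you flag as the main care point is precisely the step the paper resolves with its permutation matrix, so nothing is missing.
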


\begin{proof}
Lemma~\ref{lemma:definite} shows that the matrices in the 
SG system~\eqref{eq:galerkin} have the required properties 
of Definition~\ref{def:ph}: 
$\hat{E} \in \mathcal{S}_{\succ}^{ns}$, 
$\hat{R} \in \mathcal{S}_{\succeq}^{ns}$, 
$\hat{S} \in \mathcal{S}_{\succ}^{ms}$, and 
$\hat{J},\hat{N}$ are skew-symmetric. 
It holds that $\hat{Q} = I_{ns}$. 
The matrix $W$ in~\eqref{eq:matrix-w} associated to the pH system~\eqref{eq:ph} 
is symmetric and positive semi-definite for almost all 
$\mu \in \mathcal{M}$. 
Again Lemma~\ref{lemma:definite} implies that the stochastic 
Galerkin projection $\hat{W} = \opG_s(W)$ is symmetric and 
positive semi-definite. 
There is a permutation matrix $\Pi$ such that
\begin{equation*}
\Pi^\top \hat{W} \, \Pi = 
\begin{pmatrix}
\hat{R} & \hat{P} \\
\hat{P}^\top & \hat{S} \\
\end{pmatrix},
\end{equation*}
which is the matrix~\eqref{eq:matrix-w} 
belonging to the SG system~\eqref{eq:galerkin}. 
It follows that the symmetric matrix $\Pi^\top \hat{W} \, \Pi$ is 
also positive semi-definite. 
\end{proof}

\begin{remark}
The SG method applied to a pH system of the 
general form~\eqref{eq:ph} is not structure-preserving, 
since the SG projection $\opG_s$ is not 
multiplicative. 
Due to~\eqref{eq:operator-linear}, \eqref{eq:not-multiplicative}, 
it holds that, in general,
\begin{equation*}
\opG_s((J-R) Q) = \opG_s (JQ) - \opG_s(RQ)
\neq (\opG_s(J) - \opG_s(R)) \opG_s(Q).
\end{equation*}
Hence an alternative form like~\eqref{eq:ph-par} with $Q=I_n$ is required. 
\end{remark}

\subsection{Hamiltonian Function of Stochastic Galerkin System}
\label{sec:galerkin-hamiltonian}
The SG system~\eqref{eq:galerkin} satisfies the 
properties of a pH formulation. 
The associated Hamiltonian function is
\begin{equation} \label{eq:galerkin-hamiltonian}
\hat{H}(\hat{v}) = \tfrac12 \, \hat{v}^\top \hat{E} \hat{v}
\end{equation}
with the symmetric, positive definite mass matrix~$\hat{E}$, 
since it holds that $\hat{Q} = I_{ns}$.

\begin{theorem} \label{thm:hamiltonian}
The Hamiltonian function of the SG system represents 
an approximation of the expected value of the Hamiltonian function 
for the parameter-dependent pH systems.
More precisely, the Hamiltonians $H(\cdot, \mu)$ of~\eqref{eq:ph-par}
and $\hat{H}$ of~\eqref{eq:galerkin-hamiltonian} are related by
\begin{equation*}
\hat{H}(\hat{v}) = \mathbb{E} \biggl[ H \biggl( \sum_{j=1}^s \hat{v}_j \Phi_j(\mu), \mu \biggr) \biggr]
\end{equation*}
with $\hat{v} = (\hat{v}_1^\top, \ldots, \hat{v}_s^\top)^\top \in \R^{sn}$.
\end{theorem}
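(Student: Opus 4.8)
The plan is to verify the claimed identity by a direct computation: expand the right-hand side and match it block-by-block with the quadratic form $\tfrac12\hat{v}^\top\hat{E}\hat{v}$. Since~\eqref{eq:ph-par} is of the form~\eqref{eq:ph-trafo} with $Q=I_n$, the parameter-dependent Hamiltonian from~\eqref{eq:hamiltonian} reads $H(x,\mu)=\tfrac12\,x^\top E(\mu)\,x$, where $E(\mu)$ is symmetric positive (semi-)definite for almost all $\mu$ because $E(\mu)^\top Q = E(\mu)^\top \in \mathcal{S}_\succeq^n$.

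First I would substitute the truncated state $x=\sum_{j=1}^s\hat{v}_j\Phi_j(\mu)$ into this Hamiltonian and expand the quadratic form into a finite double sum,
\[ H\Bigl(\textstyle\sum_{j=1}^s\hat{v}_j\Phi_j(\mu),\mu\Bigr) = \tfrac12\sum_{i=1}^s\sum_{j=1}^s\Phi_i(\mu)\Phi_j(\mu)\,\hat{v}_i^\top E(\mu)\,\hat{v}_j. \]
Next I would take the expectation. Because the sum is finite and the coefficient vectors $\hat{v}_i$ are deterministic, linearity of $\mathbb{E}$ moves it inside:
\[ \mathbb{E}\Bigl[H\Bigl(\textstyle\sum_{j=1}^s\hat{v}_j\Phi_j(\mu),\mu\Bigr)\Bigr] = \tfrac12\sum_{i=1}^s\sum_{j=1}^s\hat{v}_i^\top\,\mathbb{E}\bigl[\Phi_i\Phi_j E(\mu)\bigr]\,\hat{v}_j. \]

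The crucial observation is that $\mathbb{E}[\Phi_i\Phi_j E(\mu)]$ is exactly the $(i,j)$-block $\hat{E}_{ij}$ of $\hat{E}=\opG_s(E)$: entrywise, its $(k,\ell)$-component equals $\mathbb{E}[e_{k\ell}\Phi_i\Phi_j]=\hat{a}_{ijk\ell}$ of Definition~\ref{def:function-space} (applied with $a_{k\ell}=e_{k\ell}$). Inserting this and invoking the block partition $\hat{E}=(\hat{E}_{ij})_{i,j=1}^s$ together with $\hat{v}=(\hat{v}_1^\top,\ldots,\hat{v}_s^\top)^\top$, the double sum reassembles into $\tfrac12\,\hat{v}^\top\hat{E}\hat{v}=\hat{H}(\hat{v})$ by~\eqref{eq:galerkin-hamiltonian}, which completes the argument. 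I expect no genuine obstacle here; the only points requiring care are keeping the block-index bookkeeping consistent and confirming that the entries produced by $\opG_s$ are precisely the mixed moments $\mathbb{E}[e_{k\ell}\Phi_i\Phi_j]$, so that the block matrix $\hat{E}$ and the coefficients of the expanded Hamiltonian coincide.
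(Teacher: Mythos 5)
Your proposal is correct and is essentially the paper's own proof: the paper performs the same block expansion $\hat{H}(\hat{v}) = \tfrac12\sum_{i,j=1}^s \hat{v}_i^\top \hat{E}_{ij}\hat{v}_j$, identifies the blocks $\hat{E}_{ij}$ with the mixed moments $\mathbb{E}[E(\mu)\Phi_i\Phi_j]$ from Definition~\ref{def:function-space}, and reassembles the quadratic form under the integral, merely traversing the chain of equalities from left to right rather than, as you do, from right to left. Your preliminary remark that $E(\mu)$ is symmetric (so that $H(x,\mu)=\tfrac12\,x^\top E(\mu)\,x$) is a correct and harmless addition.
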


\begin{proof}
The Hamiltonian function of~\eqref{eq:galerkin-hamiltonian} satisfies
\begin{align*}
\hat{H}(\hat{v}) &= \tfrac{1}{2} \, \hat{v}^\top \hat{E}^\top I_{sn} \hat{v}
= \tfrac{1}{2} \sum_{i,j=1}^s \hat{v}_i^\top \hat{E}_{ij} \hat{v}_j \\
&= \int_\mathcal{M} \tfrac{1}{2} \sum_{i,j=1}^s \hat{v}_i^\top E(\mu) 
\hat{v}_j \Phi_i(\mu) \Phi_j(\mu) \rho(\mu) \, \dd \mu \\
&= \int_\mathcal{M} \tfrac{1}{2} \biggl( \sum_{i=1}^s \hat{v}_i \Phi_i(\mu) \biggr)^\top E(\mu)^\top \biggl( \sum_{j=1}^s \hat{v}_j \Phi_j(\mu) \biggr) \rho(\mu) \, \dd \mu \\
&= \mathbb{E} \biggl[ H \biggl( \sum_{j=1}^s \hat{v}_j \Phi_j(\mu), \mu \biggr) \biggr],
\end{align*}
as claimed.
\end{proof}

Let $\hat{v}^{(d)}$ be the solution of an SG system 
considering all basis polynomials up to total degree~$d$. 
If the SG method is convergent 
($x^{(s)}$ in~\eqref{eq:pc-truncated} converges to $x$), 
then the Hamiltonian function 
$\hat{H}(\hat{v}^{(d)}(t))$ converges to $\mathbb{E}(H(x(t,\mu),\mu))$ 
for each $t \ge 0$ in the case of $d \rightarrow \infty$.

Each pH system is passive with respect to its Hamiltonian function 
as storage function. 
In the case of an SG system~\eqref{eq:galerkin}, 
we obtain with the Hamiltonian function~\eqref{eq:galerkin-hamiltonian}
\begin{equation*}
{\textstyle \frac{\rm d}{{\rm d}t}} \, \hat{H}(\hat{v}(t)) \le 
\hat{w}(t)^\top \hat{u} (t) = 
\sum_{i=1}^s \hat{w}_i(t)^\top {u}_i(t)
\end{equation*}
for all $t \ge 0$.
Often the inputs do not depend on the parameters in the 
system~\eqref{eq:ph-par}. 
It follows that $\hat{u}=(u^\top,0,\ldots,0)^\top$ with the 
inputs~$u$ from~\eqref{eq:ph-par}. 
Thus the bound simplifies to
\begin{equation*}
{\textstyle \frac{\rm d}{{\rm d}t}} \, \hat{H}(\hat{v}(t)) \le 
\hat{w}_1(t)^\top u(t) ,
\end{equation*}
where $\hat{w}_1(t) \approx \mathbb{E}[y(t,\mu)]$ generates 
an approximation of the expected value of the outputs 
in~\eqref{eq:ph-par}. 

In the SG system, there are $s$~outputs 
associated to the stochastic modes of the problem. 
The first mode corresponds to the expected value. 
Theorem~\ref{thm:hamiltonian} yields an information on the 
expected value of the Hamiltonian function in~\eqref{eq:ph-par}. 
An obvious question is if the higher stochastic modes of the 
Hamiltonian function can also be found in the SG system. 
However, each pH system includes just a single scalar Hamiltonian function. 
Let a solution of an initial value problem of the 
SG system~\eqref{eq:galerkin} be predetermined. 
It is straightforward to show that an approximation of the higher 
stochastic modes reads as 
$$ H_k(\hat{v}(t)) = \tfrac12 \, \hat{v}(t)^\top \hat{H}_k \hat{v}(t) $$
for $k=2,\ldots,s$
including the constant matrices 
$$ \hat{H}_k = \opG ( \tilde{E} \Phi_k ) \in \R^{ns \times ns} , $$
where the operator from Definition~\ref{def:function-space} is applied. 
It holds that $\hat{H}_1$ coincides with~\eqref{eq:galerkin-hamiltonian}.
All matrices $\hat{H}_k$ are symmetric. 
Yet the matrices are not positive definite or semi-definite for $k \ge 2$, 
because the basis polynomials~$\Phi_k$ exhibit both positive and 
negative values in the domain~$\mathcal{M}$. 


\section{Model Order Reduction}
\label{sec:mor} 
If the number of random parameters is large, then an SG 
system~\eqref{eq:galerkin} has a high dimension. 
Hence an MOR is advantageous to decrease the complexity of the problem. 

\subsection{Projection-based MOR}
A full-order model (FOM) is converted into a 
reduced-order model (ROM). 
Often MOR is performed using projection matrices. 
We describe the projections by an operator. 

\begin{definition} \label{def:projection-operator}
Let $V,W \in \R^{n \times r}$ with $r < n$ be two projection matrices. 
The projection of a matrix $A \in \R^{n \times n}$ is defined as 
\begin{equation} \label{eq:projection} 
\bar{A} = \opP (A) = W^\top A V , 
\end{equation}
which generates the smaller matrix $\bar{A} \in \R^{r \times r}$.
\end{definition}

A full (column) rank is required in the projection matrices $V,W$. 
Often the biorthogonality condition $W^\top V = I_r$ with the 
identity matrix $I_r \in \R^{r \times r}$ is imposed on the 
projection matrices. 
The operator~\eqref{eq:projection} is linear, i.e., 
\begin{equation} \label{eq:p-linear} 
\opP (\alpha A + \beta B) = \alpha \opP(A) + \beta \opP(B) 
\end{equation}
for $A,B \in \R^{n \times n}$ and $\alpha,\beta \in \R$. 
However, the operator is not multiplicative, i.e., in general
\begin{equation} \label{eq:p-not-multiplicative} 
\opP(AB) \neq \opP(A) \opP(B) 
\end{equation}
for $A,B \in \R^{n \times n}$, 
because it holds that $V W^\top \neq I_n$ 
with the identity matrix $I_n \in \R^{n \times n}$.

A general system of ODEs~\eqref{eq:ode} is shrinked to an ROM
\begin{align} \label{eq:ode-reduced}
\begin{split}
\bar{E} \dot{\bar{x}} & = \bar{A} \bar{x} + \bar{B} u \\
\bar{y} & = \bar{C} \bar{x} + D u 
\end{split}
\end{align}
with matrices $\bar{A} = \opP(A)$, $\bar{E} = \opP(E)$, 
$\bar{B} = W^\top B$, and $\bar{C} = C V$.
A reduced system~\eqref{eq:ode-reduced} may be unstable 
even if the original system is Lyapunov-stable or asymptotically stable. 
There are stability-preserving MOR techniques like 
balanced truncation, see~\cite{antoulas}, for example.

If an SG system is given in the form of a general 
pH system as in Definition~\ref{def:ph}, then an ROM is not 
necessarily in pH form again due to the 
property~\eqref{eq:p-not-multiplicative}. 
Yet our SG system~\eqref{eq:galerkin} exhibits a 
form with $\hat{Q} = I_{mn}$. 
Since the operator~\eqref{eq:projection} is linear, 
see~\eqref{eq:p-linear}, an ROM has the form 
\begin{align} \label{eq:rom}
\begin{split}
\bar{E} \dot{\bar{v}} & = 
(\bar{J} - \bar{R}) \bar{v} + (\bar{B}+\bar{P}) \hat{u} \\
\bar{y} & = (\bar{B}'-\bar{P}')^\top \bar{v} + (\hat{S} + \hat{N}) \hat{u} .
\end{split}
\end{align}
with matrices $\bar{F} = \opP(\hat{F})$ for 
$\hat{F} \in \{ \hat{J}, \hat{R}, \hat{E} \}$ and
$\bar{G} = W^\top \hat{G}$, $\bar{G}' = V^\top \hat{G}$ 
for $\hat{G} \in \{ \hat{B}, \hat{P} \}$. 
We recognise that an ROM~\eqref{eq:rom} does not represent 
a pH system in general. 

\subsection{Galerkin-type Projection}
A Galerkin-type projection-based MOR is defined by the property $W=V$, 
whereas a Petrov-Galerkin-type MOR is characterised by $W \neq V$. 
Thus a Galerkin-type method involves just a single projection matrix 
$V \in \R^{n \times r}$. 
This matrix is required to have orthonormal columns, i.e., $V^\top V = I_r$. 
Popular Galerkin-type MOR methods are the (one-side) Arnoldi algorithm 
and proper orthogonal decomposition (POD), see~\cite{antoulas}. 
Such MOR methods are structure-preserving in our problems 
with the SG systems~\eqref{eq:galerkin}.

\begin{theorem} \label{thm:galerkin-mor}
Let a pH system be given in the form~\eqref{eq:galerkin}. 
A Galerkin-type MOR yields a reduced system~\eqref{eq:rom} in 
pH form again.
The Hamiltonian functions $\hat{H}$ of~\eqref{eq:galerkin-hamiltonian} and $\bar{H}$ of~\eqref{eq:rom} are related by
$$ \bar{H}(\bar{v}) = \hat{H}(V \bar{v}) 
\qquad \mbox{for} \;\; \bar{v} \in \R^r. $$
\end{theorem}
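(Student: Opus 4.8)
The plan is to verify the defining conditions of Definition~\ref{def:ph} for the reduced system~\eqref{eq:rom}, one matrix block at a time, with reduced structure matrix $\bar{Q} = I_r$, and then to read off the Hamiltonian identity directly from the definition of $\bar{E}$. Throughout I would use the Galerkin hypothesis, i.e.\ $W = V$ with $V^\top V = I_r$, so that $V \in \R^{ns \times r}$ has full column rank and the projection operator from~\eqref{eq:projection} reduces to $\opP(A) = V^\top A V$. First I would treat the three ``structural'' matrices. Since $\hat{E} \in \cS_\succ^{ns}$ and $V$ has full column rank, the congruence $\bar{E} = V^\top \hat{E} V$ lies in $\cS_\succ^{r}$. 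Since $\hat{J}$ is skew-symmetric, $\bar{J}^\top = V^\top \hat{J}^\top V = -V^\top \hat{J} V = -\bar{J}$, so $\bar{J}$ is skew-symmetric; the feedthrough block $\hat{N}$ is left unchanged and stays skew-symmetric.

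The key step is the positive semi-definiteness of the combined matrix belonging to~\eqref{eq:rom}, namely $\bar{W} = \begin{pmatrix} \bar{R} & \bar{P} \\ \bar{P}^\top & \hat{S} \end{pmatrix}$ with $\bar{R} = V^\top \hat{R} V$ and $\bar{P} = V^\top \hat{P}$. Exactly as in the congruence arguments of Lemmas~\ref{lemma:trafo1} and~\ref{lemma:trafo2}, I would write
\begin{equation*}
\bar{W} = \begin{pmatrix} V & \\ & I_{ms} \end{pmatrix}^\top \hat{W} \begin{pmatrix} V & \\ & I_{ms} \end{pmatrix},
\end{equation*}
where $\hat{W} \in \cS_\succeq^{ns+ms}$ is the matrix~\eqref{eq:matrix-w} of the SG system~\eqref{eq:galerkin} established in Theorem~\ref{thm:galerkin-structure}. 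For any $z = (z_1^\top, z_2^\top)^\top$ one then has $z^\top \bar{W} z = (V z_1; z_2)^\top \hat{W} (V z_1; z_2) \ge 0$, so $\bar{W} \in \cS_\succeq^{r+ms}$. This yields all conditions of Definition~\ref{def:ph} for~\eqref{eq:rom}.

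The place where the Galerkin hypothesis is genuinely needed is the consistency of the input and output blocks: in~\eqref{eq:rom} the reduced input matrix is $\bar{G} = W^\top \hat{G}$ while the output matrix uses $\bar{G}' = V^\top \hat{G}$ for $\hat{G} \in \{\hat{B}, \hat{P}\}$. Only when $W = V$ do these coincide, $\bar{B} = \bar{B}'$ and $\bar{P} = \bar{P}'$, so that the output matrix is the transpose of the input matrix (up to the sign on $\bar{P}$) as required by the pH form; with $W \neq V$ the two blocks differ and the structure is destroyed. I expect this to be the only subtle point — the remaining verifications are routine — so it is worth stating explicitly as the reason the Petrov--Galerkin case is excluded.

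Finally, the Hamiltonian relation is immediate. Because $\bar{Q} = I_r$, the reduced Hamiltonian is $\bar{H}(\bar{v}) = \tfrac12\,\bar{v}^\top \bar{E}\, \bar{v}$, and substituting $\bar{E} = V^\top \hat{E} V$ gives $\bar{H}(\bar{v}) = \tfrac12\,(V\bar{v})^\top \hat{E}\,(V\bar{v}) = \hat{H}(V\bar{v})$ by~\eqref{eq:galerkin-hamiltonian}, which is the claimed identity.
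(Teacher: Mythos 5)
Your proof is correct and follows essentially the same route as the paper's: preservation of (skew-)symmetry and definiteness under the map $A \mapsto V^\top A V$ with $V$ of full column rank, the block factorisation of $\bar{W}$ through the semi-definite matrix $\hat{W}$ of the SG system, and the direct substitution $\bar{E} = V^\top \hat{E} V$ for the Hamiltonian identity. Your explicit observation that the Galerkin hypothesis $W=V$ is exactly what makes the reduced input block $W^\top \hat{G}$ coincide with the output block $V^\top \hat{G}$ (so the output matrix is the transpose of the input matrix, as the pH form requires) is left implicit in the paper's proof, which only notes this failure mode beforehand in remarking that a general ROM~\eqref{eq:rom} need not be pH; making it explicit is a small but genuine clarification, not a different approach.
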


\begin{proof}
In a Galerkin-type MOR, it holds that $\bar{A} = \opP(A) = V^\top A V$ 
for a general matrix $A \in \R^{n \times n}$. 
It follows that the operator from Definition~\ref{def:projection-operator} 
preserves symmetry, skew-symmetry, and definiteness of the matrix,
since $V \in \R^{n \times r}$ has full (column) rank.
It holds that $\bar{A} \in \mathcal{S}_{\succeq}^r$ 
if $A \in \mathcal{S}_{\succeq}^n$. 
A semi-definite matrix may change into a definite matrix. 
Hence the matrices $\bar{E},\bar{J},\bar{R}$ satisfy the properties 
of a pH form in the system~\eqref{eq:rom}. 
No conditions are required for the matrices $\bar{B},\bar{P}$. 
The matrices $\hat{S},\hat{N}$ are the same as in the 
system~\eqref{eq:galerkin}.
Moreover,
\begin{equation*}
\bar{W} = \begin{pmatrix} \bar{R} & \bar{P} \\ \bar{P}^\top & \hat{S} \end{pmatrix}
= \begin{pmatrix} V^\top & 0 \\ 0 & I_m \end{pmatrix} \hat{W} 
\begin{pmatrix} V & 0 \\ 0 & I_m \end{pmatrix} 
\end{equation*}
is symmetric and positive semi-definite, since $\hat{W}$ has this property.
Finally, the Hamiltonian function satisfies
\begin{equation*}
\bar{H}(\bar{v}) = \tfrac{1}{2} \, \bar{v}^\top \bar{E}^\top I_r \bar{v}
= \tfrac{1}{2} \, \bar{v}^\top V^\top \hat{E}^\top V \bar{v}
= \tfrac{1}{2} \, (V \bar{v})^\top \hat{E}^\top (V \bar{v})
= \hat{H}(V \bar{v}),
\end{equation*}
as claimed above.
\end{proof}

\begin{remark} 
Theorem~\ref{thm:galerkin-mor} is formulated for the 
SG system~\eqref{eq:galerkin}. 
Nevertheless, the statement is true for all pH system 
of the form~\eqref{eq:ph-trafo} with $\tilde{Q}=I_n$. 
\end{remark}

Galerkin-type MOR methods were applied to ODEs of pH form 
in~\cite{ionescu-astolfi,polyuga-schaft,wolf-etal}, for example.
There are also specific Petrov-Galerkin-type MOR methods or 
sophisticated modifications, 
which preserve the pH structure of ODE systems, 
see~\cite{gugercin-etal,huang-jiang-xu,xu-jiang}.
However, such methods are often constructed only for 
explicit systems of ODEs. 



\section{Illustrative Examples}
\label{sec:example}
We present numerical results for two test examples. 
The computations were performed using the software package 
MATLAB~\cite{matlab2020} on a
FUJITSU Esprimo P920 Intel(R) Core(TM) i7-9700 CPU with 3.00 GHz (8 cores) 
and operation system Microsoft Windows~10.

\subsection{DC Motor}
\label{sec:motor}
We consider a model of an electric motor from~\cite{schaft-jeltsema}, 
which is illustrated in Figure~\ref{fig:DCmotor}. 
The system of ODEs reads as
\begin{align} \label{eq:dcmotor}
\begin{split}
\begin{pmatrix} \dot{\varphi} \\ \dot{p} \\ \end{pmatrix} & = 
\begin{pmatrix} 
-R_{\rm m} & - K_{\rm m} \\ K_{\rm m} & - B_{\rm m} \\ 
\end{pmatrix} 
\begin{pmatrix} 
\frac{1}{L_{\rm m}} & 0 \\ 0 & \frac{1}{J_{\rm m}} \\ 
\end{pmatrix} 
\begin{pmatrix} \varphi \\ p \\ \end{pmatrix}
+ \begin{pmatrix} 1 \\ 0 \\ \end{pmatrix} V \\
I & = \begin{pmatrix} 1 & 0 \end{pmatrix} 
\begin{pmatrix} 
\frac{1}{L_{\rm m}} & 0 \\ 0 & \frac{1}{J_{\rm m}} \\ 
\end{pmatrix} 
\begin{pmatrix} \varphi \\ p \\ \end{pmatrix} 
\end{split}
\end{align} 
for the flux-linkage~$\varphi$ and the angular momentum~$p$. 
There are five (positive) physical parameters: 
an inductance~$L_{\rm m}$, a resistance~$R_{\rm m}$, 
a gyrator constant~$K_{\rm m}$, a friction~$B_{\rm m}$, 
and a rotational inertia~$J_{\rm m}$. 
The input voltage $u = V$ is supplied, 
while the output current is $y = I$. 
In this pH form, the matrices are
\begin{equation*}
J = \begin{pmatrix} 
0 & - K_{\rm m} \\ K_{\rm m} & 0 \\ 
\end{pmatrix} , \quad
R = \begin{pmatrix} 
R_{\rm m} & 0 \\ 0 & B_{\rm m} \\ 
\end{pmatrix} , \quad 
Q = \begin{pmatrix} 
\frac{1}{L_{\rm m}} & 0 \\ 0 & \frac{1}{J_{\rm m}} \\ 
\end{pmatrix} , \quad
B = \begin{pmatrix} 1 \\ 0 \\ \end{pmatrix}.
\end{equation*}
The matrix square root of $Q$ is just
\begin{equation} \label{eq:motor-square-root}
Q^{\frac12} =
\begin{pmatrix} 
\frac{1}{\sqrt{L_{\rm m}}} & 0 \\ 0 & \frac{1}{\sqrt{J_{\rm m}}}
\end{pmatrix}.
\end{equation}
As mean values, we use the physical parameters 
$L_{\rm m} = 0.001$, $R_{\rm m} = 0.01$, $K_{\rm m} = 10$, 
$B_{\rm m} = 1$, $J_{\rm m} = 1$. 
Figure~\ref{fig:bode-motor} depicts the Bode plot of the linear dynamical
system~\eqref{eq:dcmotor} for this constant choice of parameters. 
We observe a single resonance peak. 
However, the associated resonance frequency shifts depending on 
the parameters. 

\begin{figure}
{\centering
\begin{circuitikz}
\draw[fill=lightgray!50](0,0) circle (0.5 and 1); 
\draw[top color=gray!25,bottom color=black,middle color=gray!50] (-1.6,1) arc 
(90:270:0.5 and 1) -- ++(1.6,0) arc (-90:-270:0.5 and 1) -- cycle;
\draw[top color=white,bottom color=black!70] (0,3mm) arc (90:270:1.5mm and 
3mm)--++(1.5cm,0) arc (-90:-270:1.5mm and 3mm)-- cycle;
\draw (1.5cm,3mm) arc (90:-90:1.5mm and 3mm);
\node at (1.9, 0) {$J_{\rm m}$};
\draw[-latex,thick] (0.9,-0.5) arc (300:50:0.3 and 0.6);
\draw[-latex,thick] (1.4,-0.5) arc (300:50:0.3 and 0.6);
\node at (-0.1, -0.5) {$B_{\rm m}$};
\node at (-2.5, 0) {$K_{\rm m}$};
\draw[thick] (-1.1, 1) rectangle ++(0.2, 0.2);
\draw[thick] (-1.1, -1.2) rectangle ++(0.2, 0.2);
\draw[thick] (-7, 2) node[below] {$+$} to[short, o-] (-6,2)
to [european resistor, l=$R_{\rm m}$] (-4,2)
to [cute inductor, l=$L_{\rm m}$] ++ (2,0)
-- (-1, 2) -- (-1, 1.2);
\draw[thick] (-7, -2) node[above] {$-$} to[short, o-] (-4.5,-2) to [short, 
i<=$I$] (-3.5,-2) -- (-1, -2) -- (-1, -1.2);
\node at (-7,0) {$V$};
\end{circuitikz}

}
\caption{DC motor, see Section~\ref{sec:motor}.}
\label{fig:DCmotor}
\end{figure}
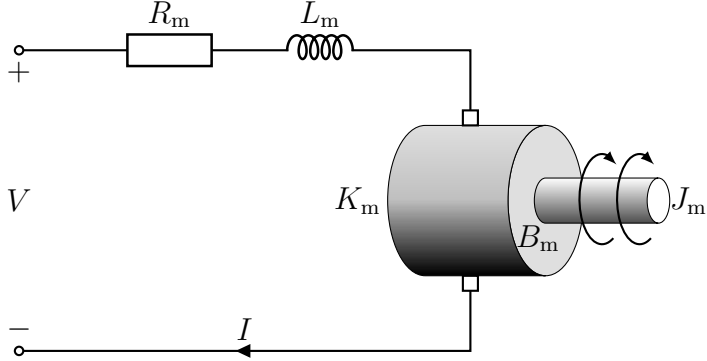

\begin{figure}
  \centering
  \begin{subfigure}[b]{0.45\textwidth}
  \centering
  \includegraphics[width=\textwidth]{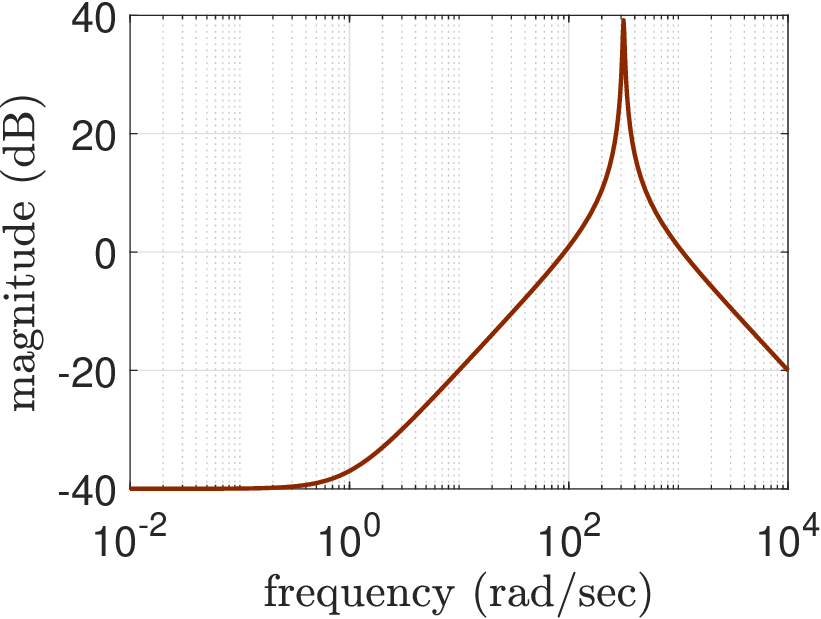}
  \caption{magnitude}
  \end{subfigure}
  \hspace{8mm}
  \begin{subfigure}[b]{0.45\textwidth}
  \centering
  \includegraphics[width=\textwidth]{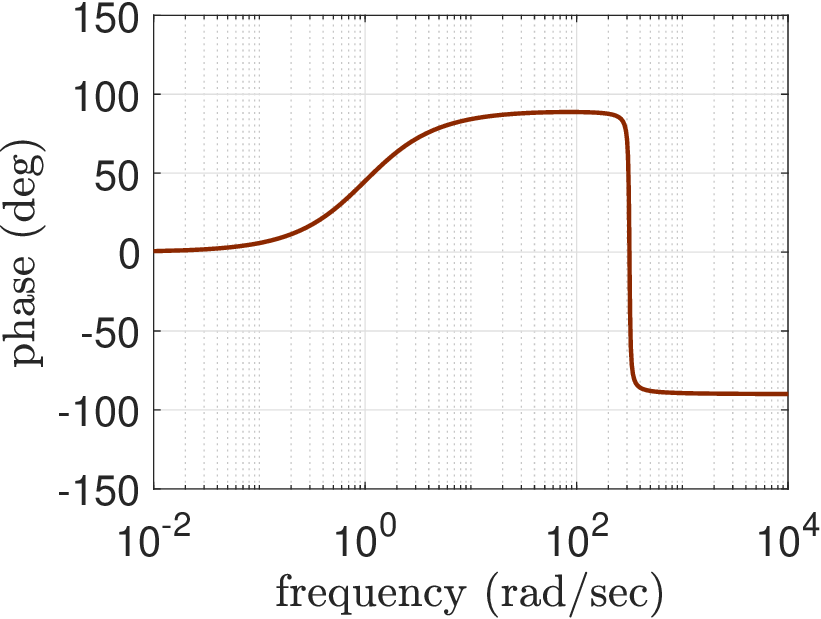}
  \caption{phase}
  \end{subfigure}
  \caption{Bode plot of DC motor model for deterministic 
  physical parameters.}
\label{fig:bode-motor}
\end{figure}

Now we replace the five physical parameters by independent random variables 
with uniform probability distributions varying symmetrically 
around the mean values. 
Consequently, the PC expansions include multivariate basis polynomials, 
which are products of univariate Legendre polynomials. 
Table~\ref{tab:number-basis} depicts the number~$s$ of basis polynomials 
for different total degrees~$d$ in a truncated 
PC expansion~\eqref{eq:pc-truncated}.

We consider both transformations of Section~\ref{sec:trafo1} 
and Section~\ref{sec:trafo2}, respectively. 
In the first transformation, 
the matrix square root~\eqref{eq:motor-square-root} is used 
in the symmetric decomposition. 
Furthermore, we examine different polynomial degrees up to six. 
In each case, we arrange three SG systems: 
for the original system~\eqref{eq:ph} and for the two 
transformed systems~\eqref{eq:ph-trafo-tilde}. 
Hence the first SG system is not in pH form. 
The matrices of the SG systems are always computed 
using a tensor-product Gauss-Legendre quadrature with $7^5 = 16807$ nodes 
to guarantee a high accuracy. 
All computed SG systems are asymptotically stable. 

\begin{table}
    \centering
    \caption{Number of basis polynomials for different total degrees.}
    \begin{tabular}{lcccccc}
         degree~$d$ & 1 & 2 & 3 & 4 & 5 & 6 \\ \hline
         number~$s$ & 6 & 21 & 56 & 126 & 252 & 462 
    \end{tabular}
    \label{tab:number-basis}
\end{table}

We compare the three SG systems using the $\htwonorm$-norm, 
which is the norm of an associated Hardy space, see~\cite{antoulas}.
Let $H_0,H_1,H_2$ be the transfer functions of the SG systems from 
the original system and the first/second transformation, respectively.  
We observe the relative differences 
\begin{equation} \label{eq:rel-diff}
D_{\rm rel} = 
\frac{\| H_0 - H_i \|_{\htwonorm}}{\| H_0 \|_{\htwonorm}} 
\qquad \mbox{for} \;\; i=1,2 . 
\end{equation}
The SG systems exhibit multiple-input-multiple-output (MIMO). 
We also investigate the restriction to single-input-single-output (SISO), 
where just the stochastic mode associated to degree zero is considered 
for inputs and outputs, 
and to single-input-multiple-output (SIMO), 
where the restriction applies only to the inputs.

We study the two cases of 1\% and 10\% variation around the 
mean values of the parameters. 
Figure~\ref{fig:galerkin-differences1} and 
Figure~\ref{fig:galerkin-differences2} illustrate the relative 
differences~(\ref{eq:rel-diff}) with respect to the total degree. 
Each SG method is convergent separately. 
Thus the differences tend to zero for increasing degree 
in the case of SISO and SIMO. 
The differences do not converge to zero in the case of MIMO. 
If an input $u(t,\mu)$ is in the space~$\ltworandom$ for fixed~$t$, 
then its PC coefficients converge to zero. 
The system norm does not take this behaviour into account. 
Furthermore, larger variations of the random parameters slow down 
the convergence. 

\begin{figure}
  \centering
  \begin{subfigure}[b]{0.45\textwidth}
  \centering
  \includegraphics[width=\textwidth]{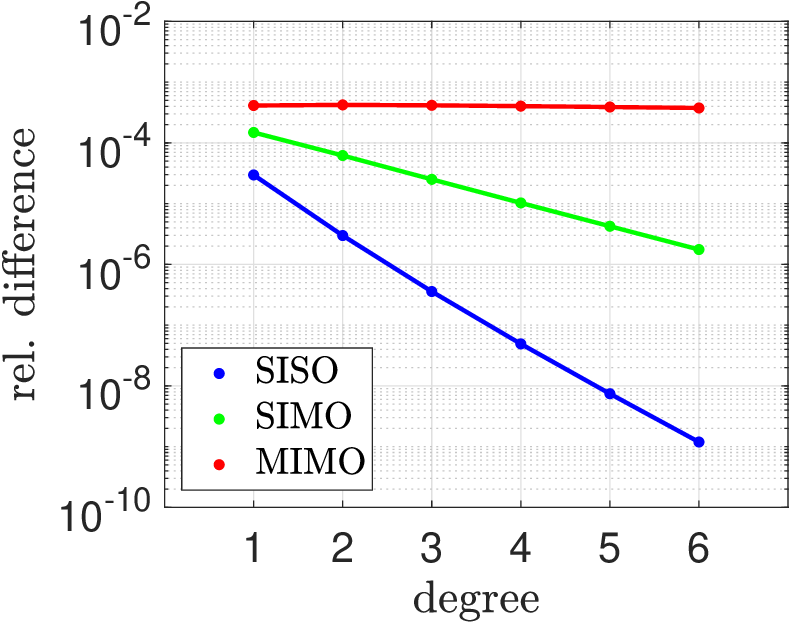}
  \caption{transformation, Section~\ref{sec:trafo1}}
  \end{subfigure}
  \hspace{8mm}
  \begin{subfigure}[b]{0.45\textwidth}
  \centering
  \includegraphics[width=\textwidth]{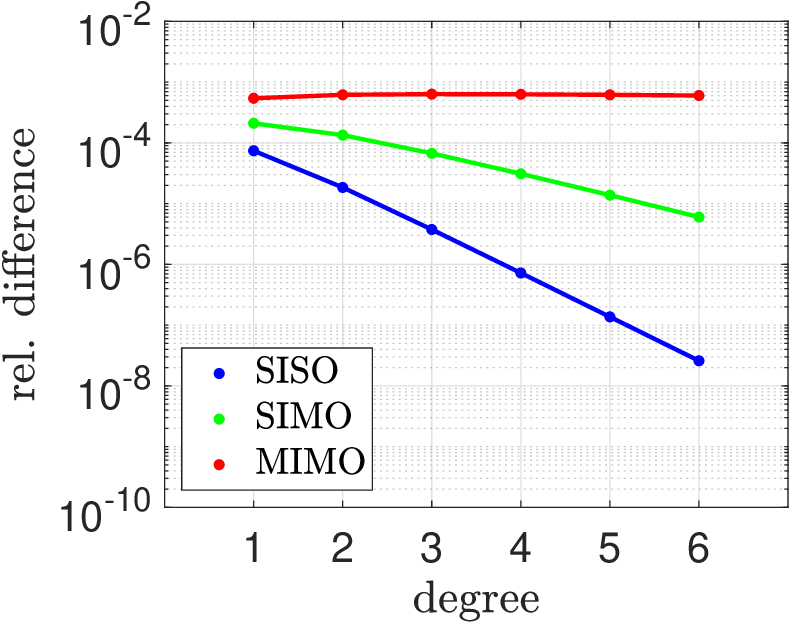}
  \caption{transformation, Section~\ref{sec:trafo2}}
  \end{subfigure}
  \caption{Relative differences in $\mathcal{H}_2$-norm between 
  SG systems from original system and transformed system
  in the case of 1\% variation.}
\label{fig:galerkin-differences1}
\end{figure}

\begin{figure}
  \centering
  \begin{subfigure}[b]{0.45\textwidth}
  \centering
  \includegraphics[width=\textwidth]{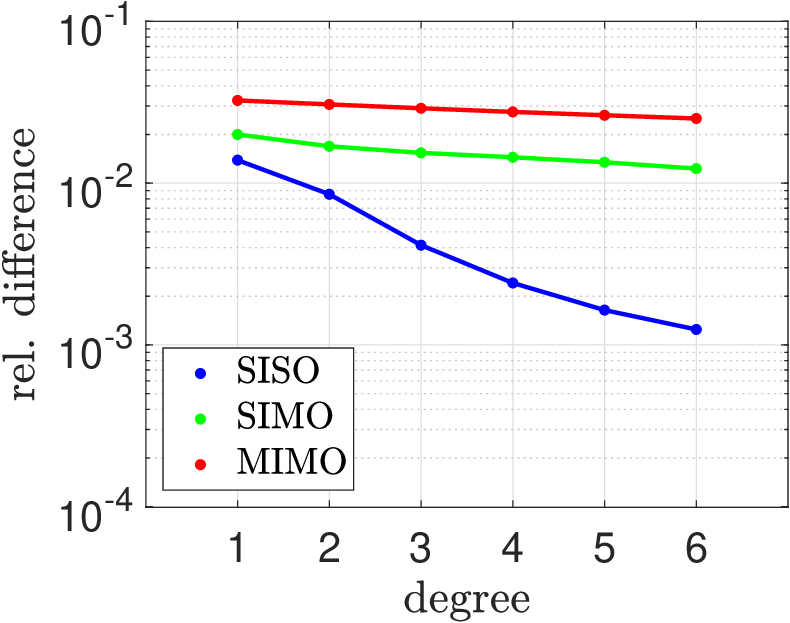}
  \caption{transformation, Section~\ref{sec:trafo1}}
  \end{subfigure}
  \hspace{8mm}
  \begin{subfigure}[b]{0.45\textwidth}
  \centering
  \includegraphics[width=\textwidth]{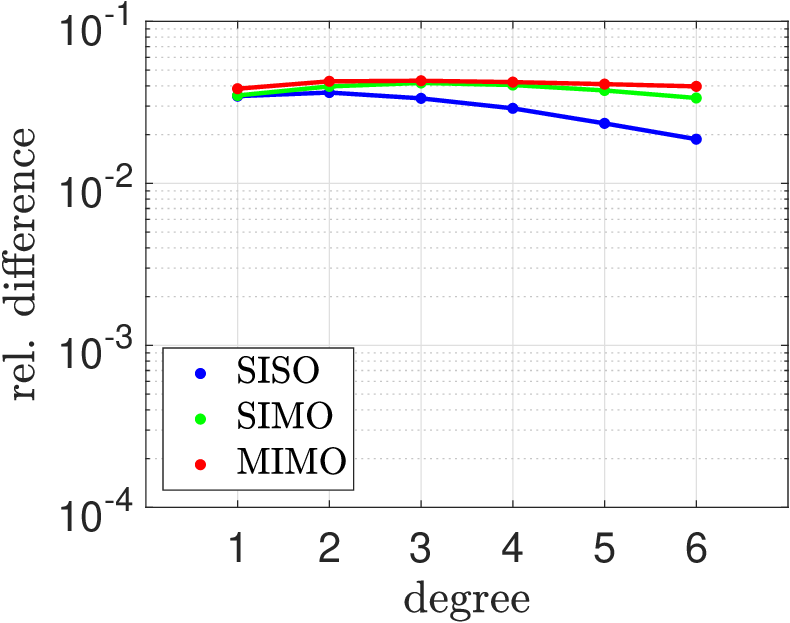}
  \caption{transformation, Section~\ref{sec:trafo2}}
  \end{subfigure}
  \caption{Relative differences in $\mathcal{H}_2$-norm between 
  SG systems from original system and transformed system
  in the case of 10\% variation.}
\label{fig:galerkin-differences2}
\end{figure}

Moreover, we execute a transient simulation of the systems for 1\% variation 
in the time interval $[0,300]$. 
Initial values are always zero. 
As input, we employ the signal $u(t) = \sin(t^2)$, 
which can be interpreted as a harmonic oscillation with increasing frequency. 
Initial value problems (IVPs) are solved by an explicit Runge-Kutta method 
of order 4(5) including variable step sizes based on a local error control. 
We consider the two SG-pH systems from the transformations 
with total degree four in the SIMO case. 
Figure~\ref{fig:motor-statistics} shows the expected value as well as the 
standard deviation of the random output current 
in the system~\eqref{eq:dcmotor}, 
which are obtained by the solution of the first SG system. 
Although the variation of the random parameters is low (1\%), 
the standard deviation of the random output is relatively large 
near the resonance frequencies.
The Hamiltonian function of the first SG system is displayed 
by Figure~\ref{fig:motor-hamiltonian}. 
We resolve the IVPs in the time interval $[0,200]$ 
using high accuracy in the time integration. 
Figure~\ref{fig:motor-error} (a) illustrates the (absolute) differences 
between the Hamiltonian functions of the two SG systems, 
which are tiny. 

For comparison, we compute an approximation of the expected value of the
random Hamiltonian function associated to the 
original system~\eqref{eq:dcmotor}. 
A tensor-prod\-uct Gauss-Legendre quadrature is used with 
$3^5 = 243$ nodes. 
Thus 234 IVPs of the system~\eqref{eq:dcmotor} are solved for 
different realisations of the parameters imposing a high accuracy. 
Figure~\ref{fig:motor-error} (b) depicts the (absolute) differences 
between the Hamiltonian function of the first SG system and the 
approximation of the expected value. 
Since the differences are very small, 
the statement of Theorem~\ref{thm:hamiltonian} is confirmed in this example. 

\begin{figure}
  \centering
  \begin{subfigure}[b]{0.45\textwidth}
  \centering
  \includegraphics[width=\textwidth]{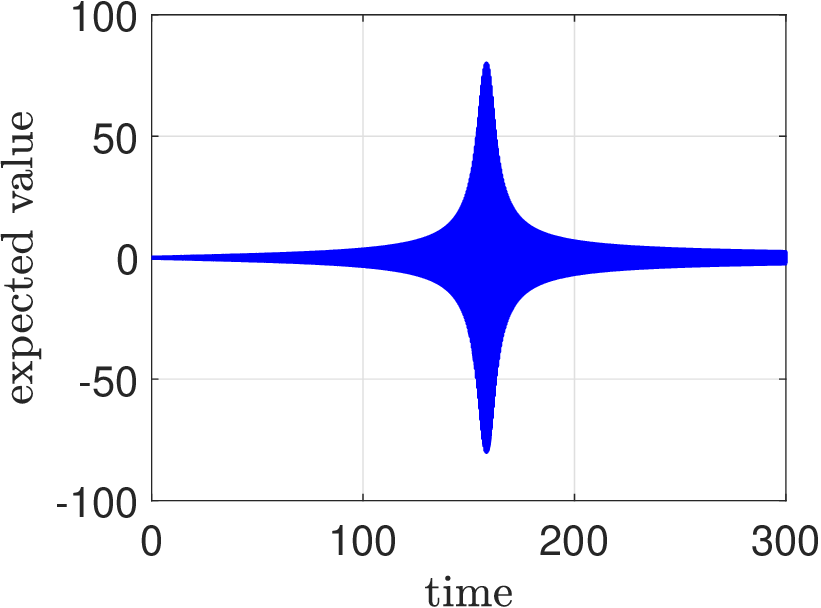}
  \caption{expected value}
  \end{subfigure}
  \hspace{8mm}
  \begin{subfigure}[b]{0.45\textwidth}
  \centering
  \includegraphics[width=\textwidth]{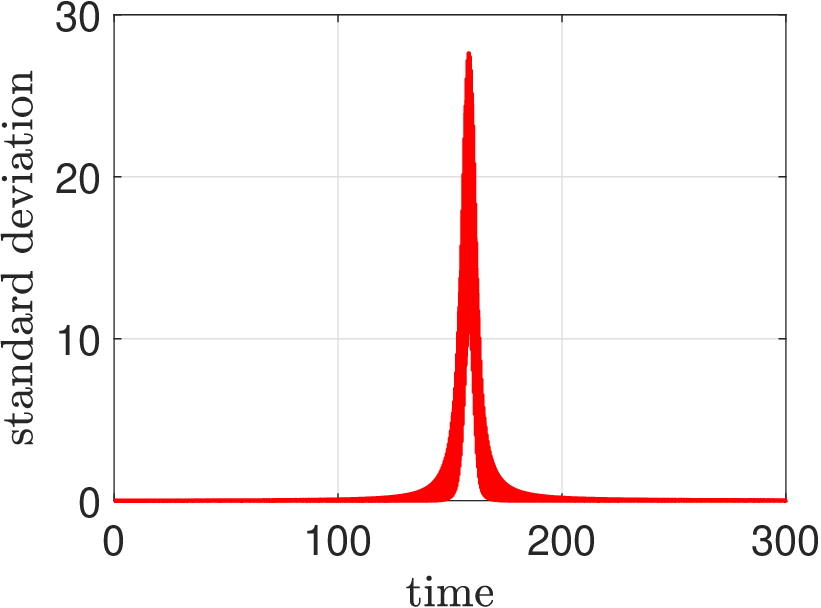}
  \caption{standard deviation}
  \end{subfigure}
  \caption{Expected value and standard deviation of random output 
  in example of DC motor.}
\label{fig:motor-statistics}
\end{figure}

\begin{figure}
  \centering
  \begin{subfigure}[b]{0.45\textwidth}
  \centering
  \includegraphics[width=\textwidth]{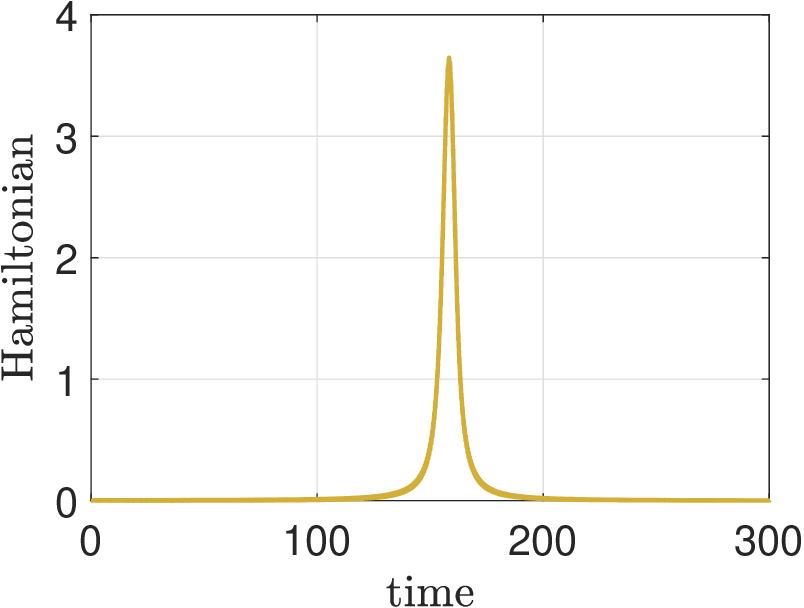}
  \caption{total time interval}
  \end{subfigure}
  \hspace{8mm}
  \begin{subfigure}[b]{0.45\textwidth}
  \centering
  \includegraphics[width=\textwidth]{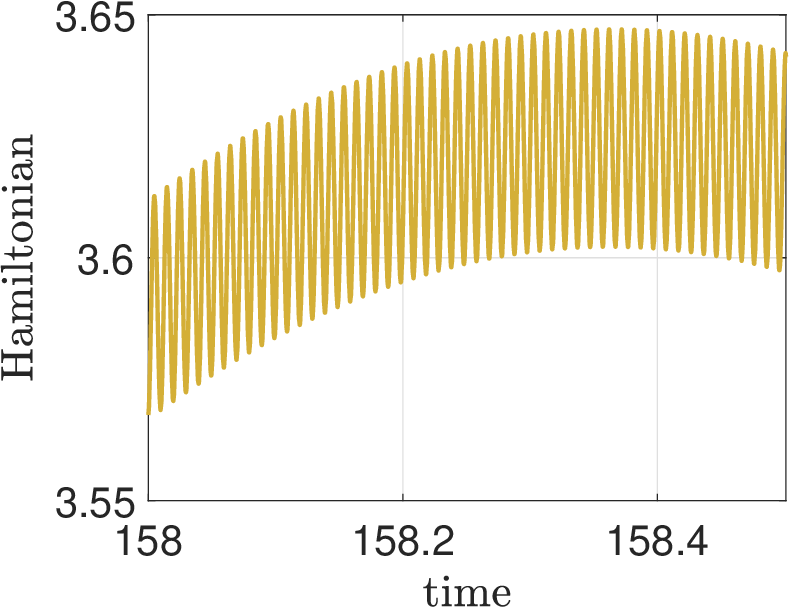}
  \caption{zoom}
  \end{subfigure}
  \caption{Hamiltonian function of SG system in example of DC motor.}
\label{fig:motor-hamiltonian}
\end{figure}

\begin{figure}
  \centering
  \begin{subfigure}[b]{0.45\textwidth}
  \centering
  \includegraphics[width=\textwidth]{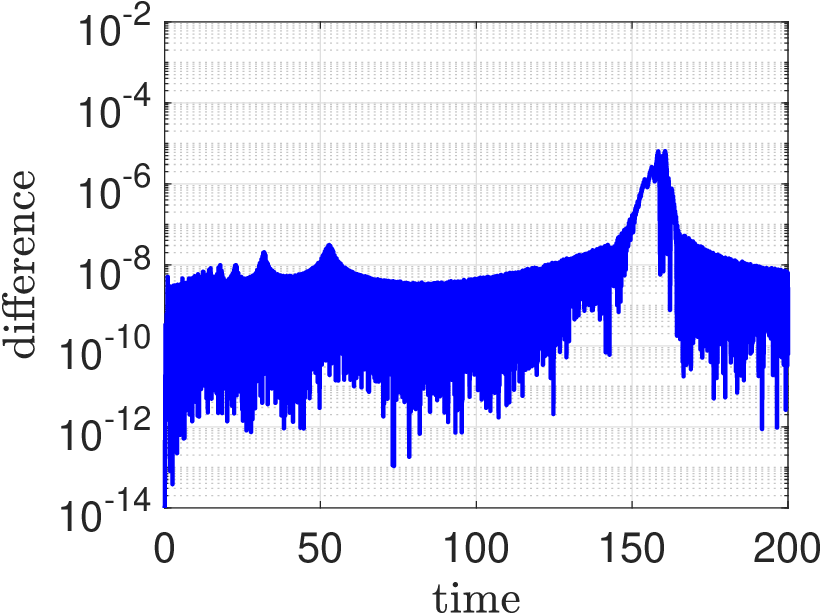}
  \caption{SG systems}
  \end{subfigure}
  \hspace{8mm}
  \begin{subfigure}[b]{0.45\textwidth}
  \centering
  \includegraphics[width=\textwidth]{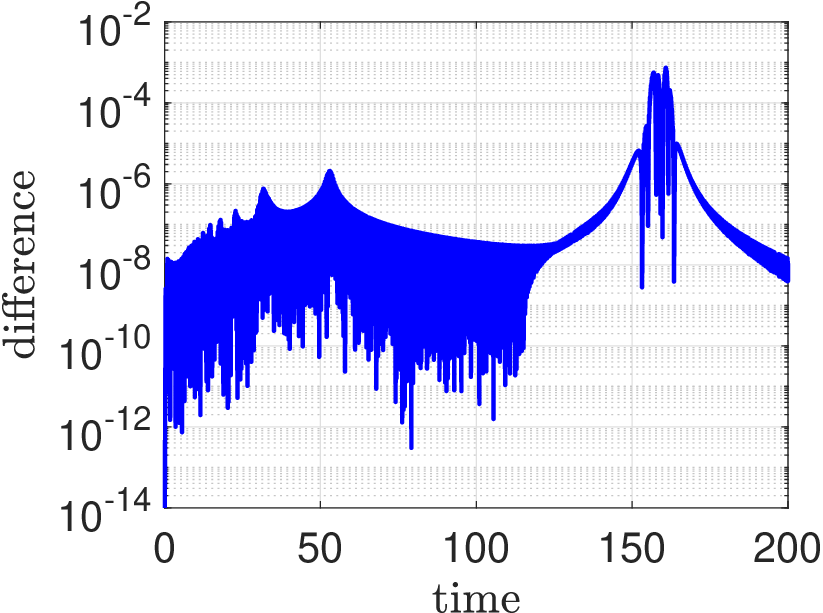}
  \caption{SG system and quadrature}
  \end{subfigure}
  \caption{Differences between Hamiltonian functions in example of DC motor.}
\label{fig:motor-error}
\end{figure}

We do not apply MOR in this test example, 
because the dimensions of the SG systems are relatively low. 
In contrast, high-dimensional SG systems are 
obtained in the following test example. 

\subsection{RLC Ladder Network}
\label{sec:network}
In~\cite{ionescu-astolfi}, an electric ladder network was considered as 
test example, which implies a system of four ODEs. 
We extend this example to $k$ cells, 
where the dimension of the ODE system becomes $n=2k$. 
The electric network includes $k$~capacitances~$C_i$, 
$k$~inductances~$L_i$, and $k$~resistances $R_i$. 
The state variables represent charges and fluxes 
$x = (q_1,\phi_1,\ldots,q_k,\phi_k)^\top$. 
The single input is an input current $u=I$, 
whereas the single output is the charge at the first capacitance $y=q_1$. 
The pH form features the tridiagonal skew-symmetric matrix
\begin{equation*}
J = 
\begin{pmatrix}
0 & -1 & \cdots & 0 \\
1 & 0 & \ddots & \vdots \\
\vdots & \ddots & \ddots & -1 \\
0 & \cdots & 1 & 0 
\end{pmatrix},
\end{equation*}
the two diagonal matrices
\begin{equation*}
R = \diag(0,R_1,0,R_2,\ldots,0,R_k) \quad \mbox{and} \quad
Q = \diag(\tfrac{1}{C_1},\tfrac{1}{L_1},\ldots,
\tfrac{1}{C_k},\tfrac{1}{L_k}) ,
\end{equation*}
and the input matrix $B = (1,0,\ldots,0)^\top$. 
We rearrange the parameters into 
$\mu_i = \frac{1}{C_i}$, $\mu_{k+i} = \frac{1}{L_i}$, 
$\mu_{2k+i} = R_i$ for $i=1,\ldots,k$. 
Now both $R(\mu)$ and $Q(\mu)$ are affine-linear functions 
of the parameters~$\mu$. 

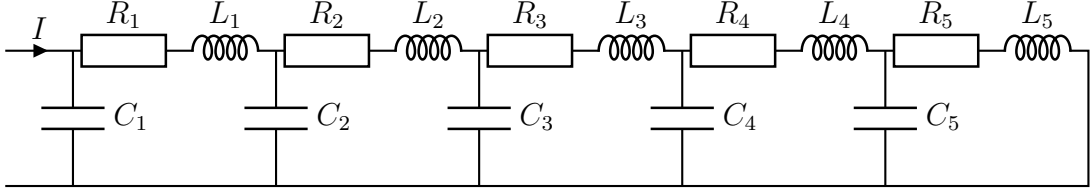
\begin{figure}
{\centering
\begin{circuitikz}[scale=0.9]
\draw[line width=0.8]
(-1,0) to [short, i=$I$] (0,0)
to [european resistor, l=$R_1$] ++ (1.5,0) to [L, l=$L_1$] ++ (1.5,0)
to [european resistor, l=$R_2$] ++ (1.5,0) to [L, l=$L_2$] ++ (1.5,0)
to [european resistor, l=$R_3$] ++ (1.5,0) to [L, l=$L_3$] ++ (1.5,0)
to [european resistor, l=$R_4$] ++ (1.5,0) to [L, l=$L_4$] ++ (1.5,0)
to [european resistor, l=$R_5$] ++ (1.5,0) to [L, l=$L_5$] ++ (1.5,0)
to ++(0,-2) to (-1, -2);
;
\draw[line width=0.8] (0,0) to [C, l=$C_1$] + (0,-2);
\draw[line width=0.8] (3,0) to [capacitor, l=$C_2$] + (0,-2);
\draw[line width=0.8] (6,0) to [capacitor, l=$C_3$] + (0,-2);
\draw[line width=0.8] (9,0) to [capacitor, l=$C_4$] + (0,-2);
\draw[line width=0.8] (12,0) to [capacitor, l=$C_5$] + (0,-2);
\end{circuitikz}

}
\caption{RLC ladder network, see Section~\ref{sec:network}.}
\label{fig:rlc-ladder-network}
\end{figure}

\begin{figure}
  \centering
  \begin{subfigure}[b]{0.45\textwidth}
  \centering
  \includegraphics[width=\textwidth]{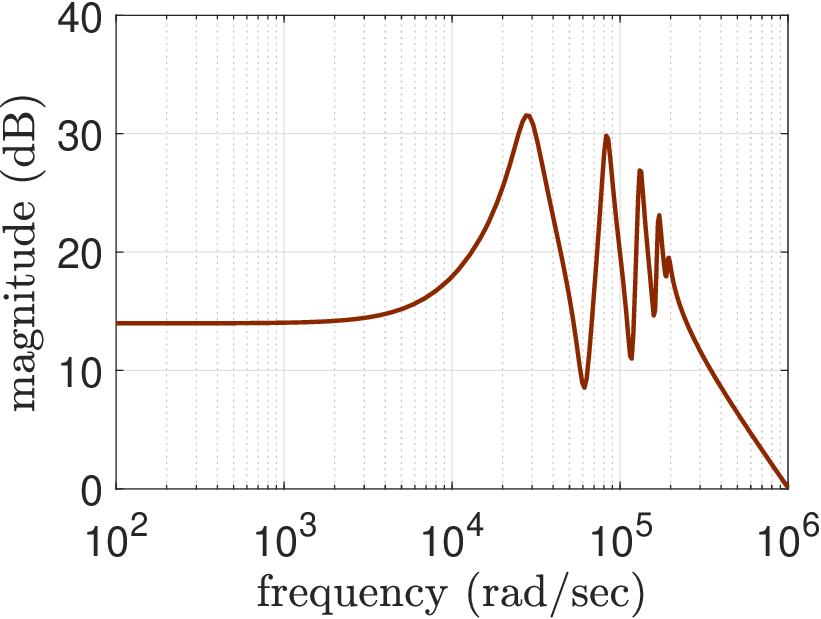}
  \caption{magnitude}
  \end{subfigure}
  \hspace{8mm}
  \begin{subfigure}[b]{0.45\textwidth}
  \centering
  \includegraphics[width=\textwidth]{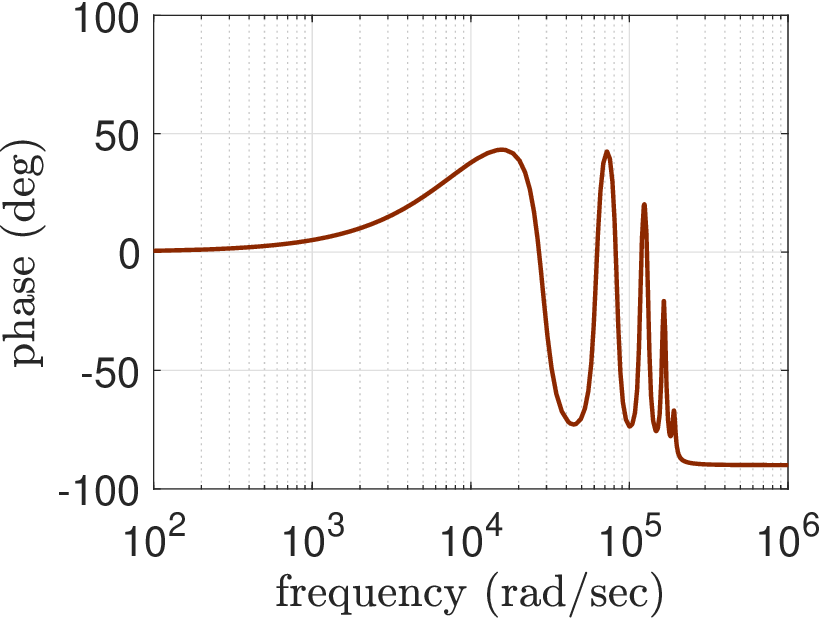}
  \caption{phase}
  \end{subfigure}
  \caption{Bode plot of RLC ladder network for deterministic 
  physical parameters.}
\label{fig:bode-rlc-ladder}
\end{figure}

In the following, we always use $k=5$ cells, which implies a system 
of $n=10$ ODEs including $q=15$ parameters. 
Figure~\ref{fig:rlc-ladder-network} illustrates this electric circuit. 
We choose the constant parameters 
$\bar{C} = 10^{-6}$ for all capacitances, 
$\bar{L} = 10^{-4}$ for all inductances, and 
$\bar{R} = 1$ for all resistances. 
The Bode plot of the resulting linear dynamical system is shown in
Figure~\ref{fig:bode-rlc-ladder}. 

Now we use a transformation to obtain a pH system of the 
form~\eqref{eq:ph-trafo} with $\tilde{Q} = I_{n}$. 
The transformation of Section~\ref{sec:trafo1} is not employed, 
because the resulting system matrices are not polynomials any more. 
Alternatively, we use the transformation of Section~\ref{sec:trafo2}. 
It follows that $\tilde{J}(\mu)$ includes quadratic polynomials, 
$\tilde{R}(\mu)$ consists of cubic polynomials, 
and $\tilde{B}(\mu),\tilde{E}(\mu)$ contain linear polynomials. 

In the stochastic modelling, we change the parameters~$\mu$  
into independent random variables with uniform probability distributions. 
The mean values are defined as the above constant choice of the parameters, 
whereas each random variable varies $10\%$ around its mean value. 
We consider two SG systems associated to the 
total degrees two and three. 
Since the matrices of the transformed pH system~\eqref{eq:ph-trafo} 
are polynomials of the random variables, 
the matrices of the SG system are computed 
semi-analytically with an accuracy up to machine precision. 
Table~\ref{tab:rlc-ladder} shows the properties of the 
SG systems. 
In particular, the ratio of non-zero elements in the sparse matrices 
is specified. 
The sparsity patterns of the system matrices are displayed 
in the case of degree three by Figure~\ref{fig:rlc-ladder-matrices}. 
Furthermore, both SG systems are asymptotically stable. 

\begin{table}
    \centering
    \caption{Properties of SG systems in the example of the
    RLC ladder network.}
    \begin{tabular}{cccccc}
         & size of & system & non-zeros & 
         non-zeros & non-zeros \\
         degree & basis & dimension & in $\hat{J}$ & 
         in $\hat{R}$ & in $\hat{Q}$ \\ \hline
         2 & 136 & 1360 & 0.228\% & 0.064\% & 0.091\% \\
         3 & 816 & 8160 & 0.045\% & 0.013\% & 0.016\%
    \end{tabular}
    \label{tab:rlc-ladder}
\end{table}


\begin{figure}
  \centering
  \includegraphics[width=0.8\textwidth]{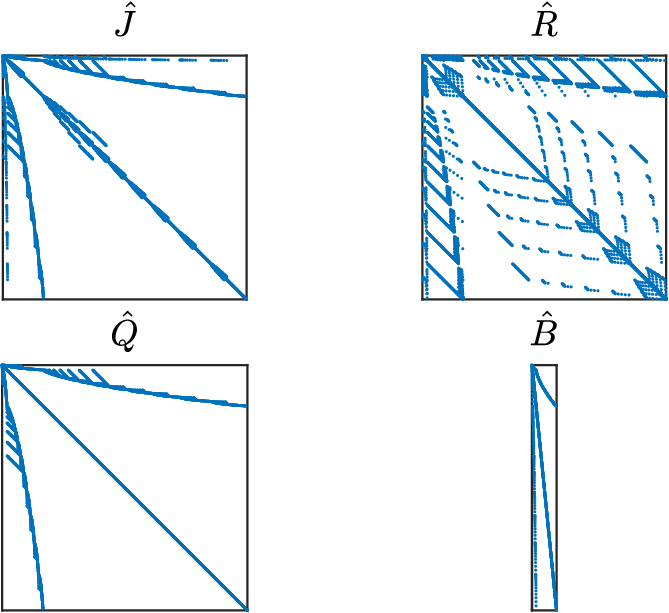}
  \caption{Sparsity patterns of matrices in the SG system 
  for polynomial of degree~$3$ in the example of the RLC ladder network.}
\label{fig:rlc-ladder-matrices}
\end{figure}

Now we study MOR applied to the SG systems. 
Since the input current does not depend on the physical parameters, 
we employ a single input in the following. 
Thus the input matrix consists of a single column. 
Three methods are used:
\begin{itemize}
    \item[i)] (one-sided) Arnoldi algorithm, see~\cite{antoulas},
    \item[ii)] an iterative rational Krylov algorithm (IRKA) 
          as proposed in~\cite{gugercin2008},
    \item[iii)] balanced truncation technique, see~\cite{antoulas}.
\end{itemize}
The methods (i) and (ii) represent Krylov subspace techniques. 
The Arnoldi algorithm is a Galerkin-type MOR method, 
which does not utilise information on the definition of outputs in the system. 
The used IRKA technique is a Petrov-Galerkin-type MOR method ($W \neq V$). 
However, we ignore the projection matrix~$W$ and set $W := V$ 
to obtain a Galerkin-type method. 
Furthermore, the method~(ii) is applied to the associated SISO systems 
including a high accuracy requirement. 
We use the toolbox sssMOR in MATLAB, see~\cite{castagnotto}, 
to compute the projection matrices in the Arnoldi algorithm 
as well as the IRKA.
For comparison, the balanced truncation technique~(iii) is employed 
to reduce the SIMO systems.
Therein, the projection matrices are calculated by direct methods of 
numerical linear algebra. 
The methods~(i) and~(ii) preserve the pH form of the 
SG systems due to Theorem~\ref{thm:galerkin-mor}. 
However, the method~(iii) does not yield reduced systems in pH form 
but in the general form~(\ref{eq:ode}).

We use the MOR methods to determine the projection matrices for 
reduced dimension~$60$. 
Table~\ref{tab:computing-times} shows the required computation times. 
We use the columns of the matrices to obtain the ROMs of 
dimensions $r=5,6,\ldots,60$.
In the IRKA, the projection matrix should be computed for each~$r$ 
separately. 
However, this method fails for low reduced dimensions, 
since some complex numbers cannot be paired. 
All calculated ROMs are asymptotically stable linear dynamical systems. 
Now we investigate the accuracy of the MOR methods. 
Let $H_{\rm FOM}$ and $H_{\rm ROM}$ be the transfer functions of 
an FOM and an ROM, respectively, in SIMO form. 
Hence the errors take all outputs into account. 
We measure relative errors by 
\begin{equation} \label{eq:mor-error} 
E_{\rm rel} = 
\frac{\| H_{\rm FOM} - H_{\rm ROM} \|_{\htwonorm}}{\| H_{\rm FOM} \|_{\htwonorm}} 
\end{equation}
using the $\htwonorm$-norm.
Figure~\ref{fig:errors-mor} demonstrates the errors~\eqref{eq:mor-error} 
of the three MOR approaches. 
The errors decay rapidly for increasing reduced dimensions in each method. 
As expected, the Arnoldi algorithm mostly exhibits the lowest accuracy, 
while the balanced truncation technique yields the highest accuracy. 
Nevertheless, the IRKA generates ROMs with errors of a magnitude 
close to the balanced truncation scheme for dimensions $50 \le r \le 60$.

\begin{table}
    \centering
    \caption{Computation times (in seconds) for projection matrix of 
    dimension~60 in MOR methods for example of RLC ladder network.}
    \begin{tabular}{cccc}
         degree & Arnoldi & IRKA & Bal. Tr. \\ \hline
         2 & 0.3 & 2.7 & 7.3 \\
         3 & 9.3 & 56.7 & 1892.8  
    \end{tabular}
    \label{tab:computing-times}
\end{table}

\begin{figure}
  \centering
  \begin{subfigure}[b]{0.45\textwidth}
  \centering
  \includegraphics[width=\textwidth]{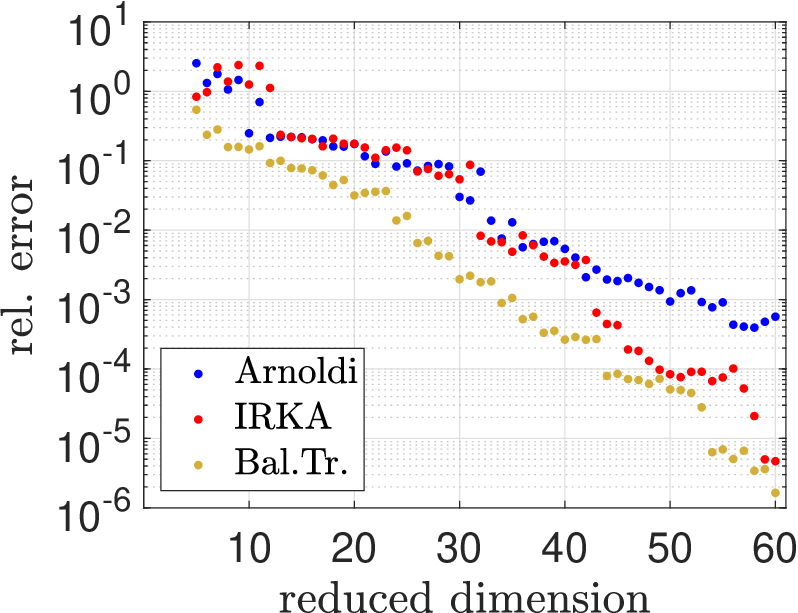}
  \caption{degree~2}
  \end{subfigure}
  \hspace{8mm}
  \begin{subfigure}[b]{0.45\textwidth}
  \centering
  \includegraphics[width=\textwidth]{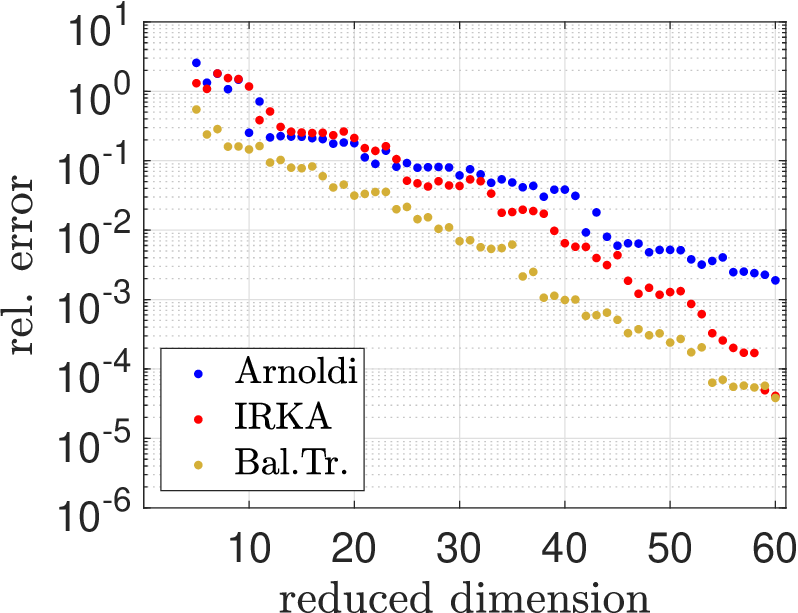}
  \caption{degree~3}
  \end{subfigure}
  \caption{Relative errors~\eqref{eq:mor-error} of MOR for SG systems 
  using different methods in example of RLC ladder network.}
\label{fig:errors-mor}
\end{figure}


\section{Conclusions}
We considered linear first-order ODEs in pH form, 
where random variables are included. 
A SG approach preserves the pH structure, 
if an equivalent transformed system of a special form is used. 
We showed that the Hamiltonian function of an SG system represents 
an approximation of the expected value of the Hamiltonian function 
associated to the original random-dependent ODEs. 
Moreover, MOR methods of Galerkin-type are structure-preserving 
when applied to the SG systems in this pH form. 
Results of numerical computations confirmed the theoretical findings. 
In a test example, we demonstrated that typical Galerkin-type MOR techniques 
already yield reduced systems with a high accuracy.


\end{document}